\theoremstyle{definition}
\newtheorem{thm}{Theorem}[section]
\newtheorem{lemma}[thm]{Lemma}
\newtheorem{cor}[thm]{Corollary}
\newtheorem{prop}[thm]{Proposition}
\newtheorem{conj}[thm]{Conjecture}
\theoremstyle{remark}
\newtheorem{rmk}[thm]{Remark}
\def\red{\textcolor{red}}
\def\zz{\mathbb{Z}}
\def\qq{\mathbb{Q}}
\def\ud{\mathrm{d}}
\def\ccc{\mathbb{C}}
\def\coker{\mathrm{coker}~}
\title{On Lower Central Series Quotients of Finitely Generated Algebras over $\zz$}
\author{Katherine Cordwell\footnote{University of Maryland}, Teng Fei\footnote{Massachusetts Institute of Techonology}~ and Kathleen Zhou\footnote{Harvard University}}
\date{}
\begin{document}
\maketitle{}

\begin{abstract}
Let $A$ be an associative unital algebra, $B_k$ its successive quotients of lower central series and $N_k$ the successive quotients of ideals generated by lower central series. The geometric and algebraic aspects of $B_k$'s and $N_k$'s have been of great interest since the pioneering work of \cite{feigin2007}. In this paper, we will concentrate on the case  where $A$ is a noncommutative polynomial algebra over $\zz$ modulo a single homogeneous relation. Both the torsion part and the free part of $B_k$'s and $N_k$'s are explored. Many examples are demonstrated in detail, and several general theorems are proved. Finally we end up with an appendix about the torsion subgroups of $N_k(A_n(\zz))$ and some open problems.
\end{abstract}

\section{Introduction}

Let $A$ be an associative unital algebra over a commutative ring $R$. There is a natural Lie algebra structure on $A$, whose Lie bracket is given by $[a,b]=a\cdot b-b\cdot a$. Let $L_k(A)$ be the lower central series of $A$. They are defined inductively by $L_1(A)=A$ and $L_{k+1}(A)=[A,L_k(A)]$. Denote by $M_i(A)$ the two-sided ideal generated by $L_k(A)$ in $A$. An easy computation shows (see \cite{kerchev2013} for instance) $M_i(A)=A\cdot L_i(A)$. Finally, we define $B_k(A)=L_k(A)/L_{k+1}(A)$ and $N_k(A)=M_k(A)/M_{k+1}(A)$ as successive quotients.

The study of $B_i$'s began in \cite{feigin2007}, where the interest was on the case where $R=\qq$ and $A=A_n(\qq)$ is the free associative algebra with $n$ generators. In their paper, Feigin and Shoikhet observed that each $B_k(A_n(\qq))$ for $k\geq 2$ admits a $W_n$-module structure, where $W_n$ is the Lie algebra of polynomial vector fields on $\qq^n$. In particular, they showed that $B_2(A_n(\qq))$ is isomorphic, as a graded vector space, to $\Omega_{\textrm{closed}}^{\textrm{even}>0}(\qq^n)$, the space of closed polynomial forms on $\qq^n$ of positive even degree.

Later on, Dobrovolska et al. \cite{dobrovolska2008} extended Feigin and Shoikhet's results to more general algebras over $\ccc$, and Balagovic and Balasubramanian \cite{balagovic2011} explored the case $A=A_n(\ccc)/(f)$ where $f$ is a generic homogeneous polynomial. On the other hand, Bhupatiraju et al. \cite{bhupatiraju2012} studied the case where $A$ is the free algebra over $\zz$ or finite fields. Specifically, they were able to describe almost all the torsion appearing in $B_2(A_n(\zz))$.

The story of the $N_i$'s is comparably briefer. The only literature on the $N_i$'s as far as the authors know is \cite{etingof2009} and \cite{kerchev2013}, where the Jordan-H\"older series of $N_i$'s (as $W_n$-modules) are investigated for $A=A_n(\qq)$.

The goal of this paper is to understand $B_k$'s and $N_k$'s for $A=A_n(\zz)/(f)$, a $\zz$-algebra generated by $n$ elements with a single homogeneous relation. We are especially interested in which torsion groups show up.

The organization of our paper is as follows. In Section 2, we begin with an easy example, the $q$-polynomials, where everything can be computed explicitly. From Section 3 to Section 5, many patterns of $B_k$ and $N_k$ for general $A$ are observed, and some of them are proven. Finally, we end up with an appendix on torsion subgroups of $N_k(A_n(\zz))$.

\section{An Example: The $q$-Polynomials}

Let $A=\zz\langle x,y\rangle/(yx-qxy)$, $q\in\zz$, be the algebra of $q$-polynomials. The case $q=1$ is trivial, because $A$ is commutative and therefore all the $B_k$'s and $N_k$'s are 0 for $i\geq 2$. Let us assume that $q\neq\pm1$ at this moment, and the goal is to compute $B_k$'s and $N_k$'s explicitly.

Let us assign $x$ and $y$ to be of degree $(1,0)$ and $(0,1)$ respectively in $\zz\langle x,y\rangle$. Clearly this assignment descends to $A$, therefore we get a $\zz^2$-grading on $A$ as well as the various quotients $B_k$'s and $N_k$'s. We will use $L_k[i,j]$ (resp. $M_k[i,j]$, $B_k[i,j]$, $N_k[i,j]$) to denote the degree $(i,j)$ part of $L_k$ (resp. $M_k$, $B_k$, $N_k$).

Using the relation $yx=qxy$ repeatedly, we know that each $L_k[i,j]$ is a free abelian group of rank at most 1, whose basis is of the form $S^k_{i,j}x^iy^j$ for some $S^k_{i,j}\in\zz$. Similarly, we write the basis of $M_k[i,j]$ as $T^k_{i,j}x^iy^j$. Our goal is to compute $S^k_{i,j}$ and $T^k_{i,j}$.

Clearly when $k>i+j$, we have $S^k_{i,j}=T^k_{i,j}=0$. So let us first look at the case $k=i+j$. By definition of lower central series, we have \[L_k[i,j]=[x,L_{k-1}[i-1,j]]+[y,L_{k-1}[i,j-1]],\] thus we obtain the recursive formula \[S^k_{i,j}=\gcd\left(S^{k-1}_{i-1,j}(q^j-1),S^{k-1}_{i,j-1}(q^i-1)\right).\] Using some elementary number theory, by induction we conclude for $k=i+j$, \[S^k_{i,j}=T^k_{i,j}=(q-1)^{i+j-2}\cdot(q^{\gcd(i,j)}-1).\] Similarly, we can prove that for $k<i+j$, \[S^k_{i,j}=(q-1)^{k-2}\cdot(q^{\gcd(i,j)}-1),~~~~ T^k_{i,j}=(q-1)^{k-1}.\]

As a result, we can formulate the following proposition:
\begin{prop}
For the $q$-polynomial algebra $A=\zz\langle x,y\rangle/(yx-qxy)$ with $q\neq\pm1$, we have \[\begin{split}&B_k(A)[i,j]\cong\begin{cases}\zz_{|q-1|} & i,j>0, k<i+j\\ \zz & i,j>0, k=i+j\\ 0&\textrm{elsewhere}\end{cases},\\ &N_k(A)[i,j]\cong\begin{cases}\zz_{|q-1|} & i,j>0, k<i+j-1 \\ \zz_{|q^{\gcd(i,j)}-1|} & i,j>0, k=i+j-1\\ \zz & i,j>0, k=i+j\\ 0&\textrm{elsewhere}\end{cases}\end{split}\]
\end{prop}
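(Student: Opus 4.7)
The plan is to read the proposition off directly from the closed-form expressions for $S^k_{i,j}$ and $T^k_{i,j}$ derived immediately above the statement. Because the $\zz^2$-grading descends to all of $L_k$, $M_k$, $B_k$, $N_k$, it suffices to compute each graded piece separately. In every bidegree, $L_k[i,j]$ and $M_k[i,j]$ are subgroups of the rank-one free abelian group $\zz\cdot x^iy^j$, so the quotients $B_k[i,j]$ and $N_k[i,j]$ are automatically cyclic, with order equal to the absolute value of the ratio of the two generators (a ratio with a zero numerator but nonzero denominator corresponds to $\zz$, and a zero denominator to the zero group).

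First I would dispose of the vanishing cases. The case $k>i+j$ is already settled before the proposition. If $i=0$ or $j=0$, every iterated commutator landing in bidegree $(i,j)$ uses only one of the two variables and is therefore zero for $k\geq 2$, so $S^k_{i,j}=T^k_{i,j}=0$ and both $B_k[i,j]$ and $N_k[i,j]$ vanish. This accounts for all the ``elsewhere'' entries.

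Next I would compute the nontrivial ratios by plugging into the formulas. For $B_k$ with $i,j>0$: when $k=i+j$, the numerator $S^{k+1}_{i,j}$ vanishes while $S^k_{i,j}\neq 0$, giving $B_k[i,j]\cong\zz$; when $k<i+j$, both $S^k_{i,j}=(q-1)^{k-2}(q^{\gcd(i,j)}-1)$ and $S^{k+1}_{i,j}=(q-1)^{k-1}(q^{\gcd(i,j)}-1)$ are nonzero (the latter is consistent whether $k+1<i+j$ or $k+1=i+j$), so their ratio is $q-1$ and $B_k[i,j]\cong\zz_{|q-1|}$. For $N_k$ there are three subregimes: $k=i+j$ again yields $\zz$; $k=i+j-1$ compares $T^k_{i,j}=(q-1)^{i+j-2}$ with $T^{k+1}_{i,j}=(q-1)^{i+j-2}(q^{\gcd(i,j)}-1)$ to give $\zz_{|q^{\gcd(i,j)}-1|}$; and $k<i+j-1$ compares $(q-1)^{k-1}$ with $(q-1)^k$ to give $\zz_{|q-1|}$.

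The only substantive step in the whole argument is the derivation of the closed forms for $S^k_{i,j}$ and $T^k_{i,j}$, which was already carried out above by induction from the recursion $S^k_{i,j}=\gcd(S^{k-1}_{i-1,j}(q^j-1),S^{k-1}_{i,j-1}(q^i-1))$ and its analogue for $T$. That induction rests on the elementary number-theoretic identity $\gcd(q^a-1,q^b-1)=q^{\gcd(a,b)}-1$, which collapses nested gcds into the stated product form; this is the only step requiring real thought. Once the formulas are in hand, the proposition follows by the mechanical case analysis described above, and no further obstacle arises.
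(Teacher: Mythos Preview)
Your proposal is correct and follows exactly the paper's approach: the proposition is stated in the paper with ``As a result'' and no separate proof, since it is read off directly from the closed forms for $S^k_{i,j}$ and $T^k_{i,j}$ derived just above. Your case analysis makes this explicit and is accurate, including the boundary check that the two formulas for $S^{k+1}_{i,j}$ agree at $k+1=i+j$ and the identification of the key identity $\gcd(q^a-1,q^b-1)=q^{\gcd(a,b)}-1$ behind the paper's phrase ``some elementary number theory.''
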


As a corollary, we have the following
\begin{cor}
All primes except those dividing $q$ appear in the torsion subgroup of $N_k(A)$.
\end{cor}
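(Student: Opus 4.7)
The plan is to use the explicit description of $N_k(A)$ from the proposition and reduce the corollary to an elementary number-theoretic statement about when a prime $p$ divides $q^m-1$ for some $m\geq 1$.

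First I would handle the easy direction: a prime dividing $q$ cannot appear. The only torsion in $N_k(A)$ comes from summands of the form $\zz_{|q-1|}$ (in degrees with $k<i+j-1$) or $\zz_{|q^{\gcd(i,j)}-1|}$ (in degrees with $k=i+j-1$). If $p\mid q$, then for every $m\geq 1$ we have $q^m\equiv 0\pmod p$, hence $q^m-1\equiv -1\pmod p$, so $p\nmid q^m-1$. In particular $p$ does not divide $|q-1|$ or any $|q^{\gcd(i,j)}-1|$, and therefore $p$ contributes nothing to the torsion subgroup.

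Next I would handle the substantive direction: every prime $p$ with $p\nmid q$ does appear. Since $\gcd(q,p)=1$, the residue of $q$ lies in $(\zz/p\zz)^\times$ and has some finite multiplicative order $d\geq 1$; equivalently $d$ is the least positive integer with $p\mid q^d-1$ (this is well-defined by Fermat's little theorem, which also gives $d\mid p-1$). Setting $i=j=d$ gives $\gcd(i,j)=d$, and then taking $k=2d-1=i+j-1$ the proposition yields
\[
N_k(A)[d,d]\;\cong\;\zz_{|q^d-1|},
\]
which has nontrivial $p$-torsion since $p\mid q^d-1$. Hence $p$ appears in the torsion subgroup of $N_{2d-1}(A)$, and combining the two directions proves the corollary.

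There is essentially no main obstacle here beyond correctly invoking the proposition and observing the dichotomy $p\mid q$ versus $p\nmid q$; the only point worth flagging is that one must use the second (finer) case $k=i+j-1$ of the proposition rather than the generic case $\zz_{|q-1|}$, since the latter only detects primes dividing $q-1$ and would miss many primes not dividing $q$.
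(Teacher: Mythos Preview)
Your argument is correct and follows the same idea as the paper's proof, which simply reads ``Fermat's little theorem.'' You have spelled out both directions explicitly---that primes dividing $q$ cannot divide any $q^m-1$, and that a prime $p\nmid q$ divides $q^d-1$ for $d$ the multiplicative order of $q$ mod $p$ (or, as the paper implicitly uses, $d=p-1$)---and then located the relevant $p$-torsion in $N_{2d-1}(A)[d,d]\cong\zz_{|q^d-1|}$ via the second case of the proposition.
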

\begin{proof}
Fermat's little theorem.
\end{proof}
The case $q=-1$ can be worked out in like manner, so we omit the proof here.

\begin{prop}
For $A=\zz\langle x,y\rangle/(yx+xy)$, we have \[\begin{split} &B_k(A)[i,j]=\begin{cases} \zz_2 & i+j>k,~i,j>0,\textrm{ not all even}\\ \zz & i+j=k,~i,j>0,\textrm{ not all even}\\ 0 & \textrm{elsewhere}\end{cases}\\ &N_k(A)[i,j]=\begin{cases} \zz_2 & \begin{cases}i+j>k,~i,j>0,\textrm{ not all even}\\ i+j>k+1,~i,j>0\textrm{ even}\end{cases}\\ \zz & \begin{cases}i+j=k,~i,j>0,\textrm{ not all even}\\ i+j=k+1,~i,j>0\textrm{ even}\end{cases}\\ 0 & \textrm{elsewhere}\end{cases}\end{split}\]
\end{prop}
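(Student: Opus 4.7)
The plan is to mimic the argument given for the $q\neq\pm 1$ case, tracking parities carefully. As before, the $\zz^2$-grading together with the relation $yx=-xy$ forces each $L_k[i,j]$ and $M_k[i,j]$ to be a cyclic subgroup of $\zz\cdot x^iy^j$, determined by a single integer $S^k_{i,j}$ (resp.\ $T^k_{i,j}$). The same recursive derivation as in the general $q$ case still yields
\[
S^k_{i,j} = \gcd\bigl(S^{k-1}_{i-1,j}\,((-1)^j-1),\; S^{k-1}_{i,j-1}\,((-1)^i-1)\bigr),
\]
where the factor $(-1)^j-1$ (resp.\ $(-1)^i-1$) equals $-2$ for odd $j$ (resp.\ $i$) and vanishes otherwise. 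This immediately splits the analysis by the parity of $(i,j)$.

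The next step is to evaluate $S^k_{i,j}$ by induction on $k$, starting from $S^1_{i,j}=1$. I expect to show that $L_k[i,j]=0$ for all $k\geq 2$ whenever $i,j$ are both even; the base case $k=2$ follows from a direct commutator calculation using $x^ay^b\cdot x^{i-a}y^{j-b}=(-1)^{b(i-a)}x^iy^j$ and the observation that when $i,j$ are both even the two signs in the commutator coincide. The inductive step uses the recursion, both of whose factors vanish in this regime. In all other cases with $i,j>0$, one of the two factors vanishes while the other contributes exactly one power of $2$, and induction yields $|S^k_{i,j}|=2^{k-1}$ for $2\leq k\leq i+j$ and $S^k_{i,j}=0$ for $k>i+j$. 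Taking successive quotients $S^k_{i,j}\zz/S^{k+1}_{i,j}\zz$ then produces the stated $B_k[i,j]$.

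For the ideal side, I would use $M_k=A\cdot L_k$: any product $x^ay^b\cdot(S^k_{i-a,j-b}x^{i-a}y^{j-b})$ collapses under $yx=-xy$ to $\pm S^k_{i-a,j-b}\,x^iy^j$, and since $L_k[0,\ast]=L_k[\ast,0]=0$ for $k\geq 2$ this gives
\[
T^k_{i,j} = \gcd\bigl\{|S^k_{i',j'}|\,:\,0<i'\leq i,\ 0<j'\leq j\bigr\}.
\]
Because every nonzero $S^k_{i',j'}$ has absolute value $2^{k-1}$, we have $T^k_{i,j}\in\{0,2^{k-1}\}$. A short case analysis shows $T^k_{i,j}=2^{k-1}$ precisely when either $(i,j)$ is not both even with $i+j\geq k$, or $(i,j)$ is both even with $i+j\geq k+1$; the extra slack in the second case comes from the odd neighbor $(i-1,j)$ or $(i,j-1)$, which contributes even though $(i,j)$ itself does not. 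Comparing $T^k_{i,j}$ with $T^{k+1}_{i,j}$ then reproduces all four cases of $N_k[i,j]$.

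The main subtlety, I expect, is the parity bookkeeping in the both-even case: one must verify that $S^k_{i,j}=0$ is a genuine vanishing rather than a degenerate recursion (so that no stray even coefficient creeps back at some $k$), and then see carefully that $T^k_{i,j}$ remains nonzero at a both-even degree $(i,j)$ precisely through its odd neighbors. Once these points are settled, everything else is essentially a formal specialization at $q=-1$ of the computation already carried out for $q\neq\pm 1$.
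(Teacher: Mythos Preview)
Your proposal is correct and is precisely the approach the paper intends: the paper omits the proof entirely, saying only that ``the case $q=-1$ can be worked out in like manner,'' and your outline is exactly that specialization with the necessary parity bookkeeping.

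One small technical point worth tightening: the two-term recursion
\[
S^k_{i,j}=\gcd\bigl(S^{k-1}_{i-1,j}((-1)^j-1),\,S^{k-1}_{i,j-1}((-1)^i-1)\bigr)
\]
is, a priori, only the containment $[x,L_{k-1}]+[y,L_{k-1}]\subset L_k$, since $L_k=[A,L_{k-1}]$ allows bracketing with arbitrary monomials, not just $x$ and $y$. In the paper this recursion is stated only for $k=i+j$, where it is exact for degree reasons. For general $k$ the missing containment is immediate here: every bracket $[x^ay^b,x^cy^d]$ in this algebra is either $0$ or $\pm 2\,x^{a+c}y^{b+d}$, so by induction every term in the full $\gcd$ lies in $\{0,\pm 2^{k-1}\}$, and your two-term computation already exhibits a nonzero one. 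For the both-even vanishing, rather than invoking the recursion (which only controls two of the terms), note simply that $L_k\subset L_2$ for $k\ge 2$, so your direct verification of $L_2[i,j]=0$ already suffices. With these two one-line observations in place, your argument is complete.
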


\section{A Basis for $N_2(\qq\langle x,y\rangle/(x^m+y^m))$}

Another simple example to study is the case $A(m)=\zz\langle x,y\rangle/(x^m+y^m)$ for $m\geq2$ an integer. If we assign both $x$ and $y$ to be of degree 1, then we get a graded algebra structure on $A$. Again, let $L_k[d]$ (resp. $M_k[d]$, $B_k[d]$, $N_k[d]$) be the degree $d$ part of $L_k$ (resp. $M_k$, $B_k$, $N_k$). Thanks to \cite{bosma1997}, Magma helps us to get the data presented in Table 1:

\begin{table}[h]
\centering
\begin{tabular}{l|ccccccc}
\hline
r$(N_2[d])$&2&3&4&5&6&7&8\\
\hline
2&1\\
3&1&2&1\\
4&1&2&3&2&1\\
5&1&2&3&4&3&2&1\\
\hline
\end{tabular}
\caption{The ranks of $N_2(A(m))[d]$ for $m=2,3,4,5$.}
\end{table}

Here each entry represents the rank of $N_2(A(m))[d]$, where the first row indicates the degree $d$ and the left column values of $m$. The pattern above is extremely clear, which leads to the following proposition:
\begin{prop}\label{prop31}
For $A=\zz\langle x,y\rangle/(x^m+y^m)$, we have \[\textrm{Rank}(N_2[d])=\begin{cases}d-1 & d<m \\ 2m-d-1 & m\leq d\leq 2m-2 \\ 0 & \textrm{otherwise}\end{cases}\]
\end{prop}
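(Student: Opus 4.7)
My plan is to establish a graded isomorphism
\[
N_2(A(m))\otimes_\zz\qq\;\cong\;\bigl(\qq[x,y]/(x^{m-1},y^{m-1})\bigr)\cdot[x,y],
\]
with $[x,y]$ placed in degree $2$, so that a basis for the degree-$d$ piece is $\{x^iy^j[x,y]:0\le i,j\le m-2,\;i+j=d-2\}$. Counting these monomials gives $d-1$ when $d\le m$ and $2m-d-1$ when $d\ge m$, matching the proposition. Since we only care about ranks, I work over $\qq$ throughout.

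I would first record three structural facts that hold in any unital algebra on two generators: (a) $M_i\cdot M_j\subseteq M_{i+j-1}$, so in particular $L_2\cdot L_2\subseteq M_3$ and every commutator commutes with $A$ modulo $M_3$, giving $N_2$ the structure of a module over $\bar A:=A/M_2$; (b) the class of $[f,g]$ in $N_2$ depends only on $\bar f,\bar g\in\bar A$, since perturbing $f$ by an element of $M_2$ alters $[f,g]$ by something in $A\cdot L_3+L_2\cdot L_2\subseteq M_3$; (c) the Jacobian formula
\[
[f,g]\;\equiv\;\bigl(\partial_x\bar f\cdot\partial_y\bar g-\partial_y\bar f\cdot\partial_x\bar g\bigr)\,[x,y]\pmod{M_3},
\]
obtained by bilinear expansion from the base case $[x^a,y^b]\equiv abx^{a-1}y^{b-1}[x,y]$, an easy induction using (a). Together with the identification $N_2(A_2(\qq))\cong\qq[x,y]\cdot[x,y]$ as a free cyclic $\qq[x,y]$-module --- which follows from \cite{kerchev2013}, or by combining the upper bound from (a)--(c) with the Feigin--Shoikhet description of $B_2$ --- this presents $N_2(A(m))_\qq$ as the quotient of $\qq[x,y]\cdot[x,y]$ by the image $K$ of $J\cap M_2(A_2(\qq))$, where $J$ is the two-sided ideal generated by $r:=x^m+y^m$.

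The inclusion $K\supseteq(x^{m-1},y^{m-1})[x,y]$ is immediate: since $[r,y]$ and $[r,x]$ vanish in $A(m)$, their classes in $N_2(A_2)$ lie in $K$, and (c) computes these classes to be $mx^{m-1}[x,y]$ and $-my^{m-1}[x,y]$ respectively; dividing by $m$ and invoking that $K$ is a $\qq[x,y]$-submodule yields the claim. The main obstacle is the reverse inclusion. Writing a general element of $J\cap M_2(A_2)$ as $h=\sum f_irg_i$, the hypothesis $h\in M_2$ forces $\sum\bar f_i\bar g_i=0$ in $\qq[x,y]$ (since $\qq[x,y]$ is a domain and $\bar r\neq 0$), so $a:=\sum f_ig_i\in M_2(A_2)$. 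The identity $f_irg_i=rf_ig_i+[f_i,r]g_i$ then gives the decomposition $h=r\cdot a+\sum_i[f_i,r]g_i$. Choosing $p\in\qq[x,y]$ with $a\equiv p\cdot[x,y]\pmod{M_3}$, one has $r\cdot a\equiv(x^m+y^m)p\cdot[x,y]\in(x^{m-1},y^{m-1})[x,y]$ (using $x^m+y^m=x\cdot x^{m-1}+y\cdot y^{m-1}$), while (c) yields
\[
[f_i,r]\,g_i\;\equiv\;\bar g_i\bigl(my^{m-1}\partial_x\bar f_i-mx^{m-1}\partial_y\bar f_i\bigr)[x,y]\pmod{M_3},
\]
also in $(x^{m-1},y^{m-1})[x,y]$. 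Summing gives $K\subseteq(x^{m-1},y^{m-1})[x,y]$, and the monomial count above produces the stated rank formula. The principal technical difficulty lies in the careful handling of the $M_k$-filtration in proving (a)--(c) and in the decomposition of $h$; once these tools are in place, the final computation reduces to bookkeeping.
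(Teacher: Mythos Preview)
Your argument is correct and arrives at exactly the same description as the paper, namely $N_2(A(m))_\qq\cong\bigl(\qq[x,y]/(x^{m-1},y^{m-1})\bigr)\cdot u$ with $u=[x,y]$, followed by the obvious monomial count. The difference lies in how the hard direction (``these are the \emph{only} relations'') is verified. The paper shows the set $\{x^iy^ju:i,j<m-1\}$ is linearly independent by invoking the Feigin--Shoikhet isomorphism $A_2/M_3(A_2)\cong\Omega^{\mathrm{even}}(\qq^2)_*$ and checking the claim on the differential-form side. You instead compute the kernel of $N_2(A_2)\twoheadrightarrow N_2(A(m))$ directly: writing any $h=\sum f_irg_i\in J\cap M_2(A_2)$ as $h=r\,a+\sum[f_i,r]g_i$ with $a=\sum f_ig_i\in M_2$, then using your Jacobian identity (c) to see that each piece lands in $(x^{m-1},y^{m-1})\cdot[x,y]$. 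This is more elementary and self-contained; once $N_2(A_2)\cong\qq[x,y]\cdot[x,y]$ is known, you never need to pass to the twisted de Rham algebra. The paper's route is shorter to write but imports a heavier tool; yours is a genuine alternative that moreover generalises transparently to any homogeneous $f$ (recovering the paper's equation $\mathrm{Rank}(N_2[d])=\mathrm{Rank}(\qq[x,y]/(f_x,f_y)[d-2])$ without further work).

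One small caution: your statement (a), $M_i\cdot M_j\subseteq M_{i+j-1}$, is not known in this generality; what is established is the case $M_2\cdot M_2\subseteq M_3$ (Feigin--Shoikhet, or Jennings) and the odd-index cases (Bapat--Jordan). Fortunately your proof only uses $L_2\cdot L_2\subseteq M_3$ and $[A,M_2]\subseteq M_3$, both of which follow from $M_2M_2\subseteq M_3$, so you should cite that specific fact rather than the general inclusion.
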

Because we are only concerned about ranks, we may tensor with $\qq$. Then Proposition \ref{prop31} follows directly from finding a basis for $N_2(\qq\langle x,y\rangle/(x^m+y^m))$, as Balagovic and Balasubramanian did for $B_2(\qq\langle x,y\rangle/(x^m+y^m))$ in \cite{balagovic2011}, Section 3.

To proceed, let us introduce a new variable $u=[x,y]\in A$. By abuse of notations, we are going to use letters $x,y$ or $u$ to represent corresponding classes in the quotient algebras.
\begin{lemma}\label{lemma32}
In $A/M_3(A)$, we have \[u^2=[u,x]=[u,y]=x^{m-1}u=y^{m-1}u=0.\]
\end{lemma}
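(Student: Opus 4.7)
The plan is to establish the five vanishings separately: the first two follow immediately from the definition of $L_3$, the middle one ($u^2 = 0$) rests on an algebra-independent Leibniz identity that forces $L_2(A) \cdot L_2(A) \subset M_3(A)$, and the last two exploit the defining relation $x^m + y^m = 0$.

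For $[u, x]$ and $[u, y]$: by definition $L_3(A) = [A, L_2(A)]$, so both brackets already lie in $L_3 \subset M_3$ and vanish in $A/M_3$. In particular, multiplication by $u$ commutes with multiplication by $x$ and by $y$ modulo $M_3$, a fact I will use repeatedly below.

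For $u^2$: I would expand $[a, [b, cd]]$ via two applications of the Leibniz rule,
\[
[a, [b, cd]] = [a, [b,c]d + c[b,d]] = [a, [b,c]]\,d + [b,c][a,d] + [a,c][b,d] + c[a, [b,d]].
\]
The three terms $[a, [b, cd]]$, $[a, [b,c]]\,d$, and $c[a,[b,d]]$ all lie in $M_3$ (the first in $L_3$, the other two in $L_3 \cdot A$ and $A \cdot L_3$, and $M_3$ is two-sided), so rearranging forces $[b,c][a,d] + [a,c][b,d] \in M_3$. Specializing $a = b = x$ and $c = d = y$ then gives $2u^2 \in M_3$, hence $u^2 = 0$ in $A/M_3$ after tensoring with $\qq$. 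This is the main obstacle: naive attempts (such as sliding $u$ past itself using the centrality just proved) collapse to tautologies, and one really does need to hunt for the right Leibniz identity.

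For $x^{m-1}u$ and $y^{m-1}u$: the relation $x^m = -y^m$ yields $[x^m, y] = -[y^m, y] = 0$ in $A$, while the Leibniz expansion gives
\[
[x^m, y] = \sum_{i=0}^{m-1} x^i\, u\, x^{m-1-i}.
\]
The centrality of $u$ modulo $M_3$ collapses every summand to $x^{m-1}u$, so $m\,x^{m-1}u \equiv 0 \pmod{M_3}$. Dividing by $m$ (permissible since we have tensored with $\qq$) forces $x^{m-1}u = 0$ in $A/M_3$, and the symmetric computation with $[y^m, x] = 0$ handles $y^{m-1}u$.
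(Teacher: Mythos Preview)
Your proposal is correct and is precisely the kind of argument the paper's terse ``direct calculation'' stands for: the paper gives no details, and your Leibniz-rule expansions are the natural way to fill them in.

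One small refinement worth noting: in your $u^2$ step, the specialization $a=b=x$, $c=d=y$ yields only $2u^2 \in M_3$, forcing you to invoke the passage to $\qq$. If instead you take $a=x$, $b=y$, $c=x$, $d=y$ in the same identity $[b,c][a,d] + [a,c][b,d] \in M_3$, you get $[y,x][x,y] + [x,x][y,y] = -u^2 \in M_3$ directly, with no factor of $2$. This does not affect correctness here, since the section is explicitly working over $\qq$, but it shows that $u^2 = 0$ in $A/M_3$ already holds integrally. (The divisions by $m$ in the $x^{m-1}u$ and $y^{m-1}u$ steps, by contrast, genuinely require characteristic zero as stated.)
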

\begin{proof}
Direct calculation.
\end{proof}
\begin{rmk}
The triple $\{x,y,u\}$ in $A/M_3(A)$ satisfies the commutative relations of the standard generators of a Heisenberg algebra.
\end{rmk}
The above lemma tells us that $x^iy^j~(0\leq i<m)$ and $x^iy^ju~(i,j<m-1)$ span $A/M_3(A)$. Let us consider the short exact sequence \[0\to N_2(A)\to A/M_3(A)\xrightarrow{\pi}A/M_2(A)\to0.\] Note $A/M_2(A)\cong\qq[x,y]/(x^m+y^m)$ is the abelianization of $A$, we easily see that the images of $x^iy^j~(0\leq i<m)$ under $\pi$ are linearly independent. As $\pi(u)=0$, a natural guess would be the following:
\begin{prop}
$x^iy^ju~(i,j<m-1)$ form a basis for $N_2(A)$.
\end{prop}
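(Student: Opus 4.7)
The plan is to split the claim into spanning and linear independence. Spanning is immediate: combining Lemma~\ref{lemma32} with the short exact sequence $0\to N_2(A)\to A/M_3(A)\xrightarrow{\pi} A/M_2(A)\to 0$, one observes that $A/M_3(A)$ is spanned by $\{x^iy^j:0\le i<m\}\cup\{x^iy^ju:0\le i,j<m-1\}$, while the first family accounts for the image of $\pi$, so the $x^iy^ju$ must span $N_2(A)=\ker\pi$.

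For linear independence I plan to construct an explicit representation of $A$ in which the operators $X^iY^jU$ act as a linearly independent family. Let $V_0=\qq[x,y]/(x^m+y^m)$, $V_1=\qq[x,y]/(x^{m-1},y^{m-1})$, $V=V_0\oplus V_1$, and let $\rho\colon V_0\to V_1$ be the natural surjection. Set
\[X(f_0,f_1)=(xf_0,\,xf_1),\qquad Y(f_0,f_1)=(yf_0,\,yf_1-\rho(\partial_xf_0)).\]
Here $\rho\circ\partial_x$ is well-defined because $\partial_x$ carries $(x^m+y^m)$ into $(x^{m-1})\subset(x^{m-1},y^{m-1})$. A short calculation then gives $X^m+Y^m=0$ on $V$, yielding a genuine $A$-module structure, and $U:=[X,Y]$ turns out to act by $(f_0,f_1)\mapsto(0,\rho(f_0))$, which is central in the image of $A$, squares to zero, and satisfies $X^{m-1}U=Y^{m-1}U=0$. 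These facts imply that the image of $L_3(A)$ in $\mathrm{End}(V)$ vanishes, hence so does the image of $M_3(A)$, and the representation descends to $A/M_3(A)$.

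To close the argument I would evaluate at the test vector $(1,0)\in V$: a straightforward induction yields $X^iY^jU\cdot(1,0)=(0,x^iy^j)$, so any relation $\sum c_{ij}\,x^iy^ju=0$ in $N_2(A)$ pushes forward to $\sum c_{ij}\,x^iy^j=0$ in $V_1$, and since $\{x^iy^j:0\le i,j\le m-2\}$ is a basis of $V_1$ every $c_{ij}$ must vanish. The main obstacle will be arriving at the correct twist in the definition of $Y$: the term $-\rho(\partial_xf_0)$ is what realizes $[X,Y]$ as multiplication by $u$, and one must simultaneously ensure compatibility with the relation $x^m+y^m=0$ and with the annihilation properties $x^{m-1}u=y^{m-1}u=0$ coming from Lemma~\ref{lemma32}. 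Once this representation is in place, the remaining verifications are routine.
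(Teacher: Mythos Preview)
Your proposal is correct and takes a genuinely different route from the paper. The paper establishes linear independence by invoking the Feigin--Shoikhet isomorphism $A_2/M_3(A_2)\cong\Omega^{\mathrm{even}}(\qq^2)_*$ from \cite{feigin2007}, rewriting $A/M_3(A)$ as the quotient of $\Omega^{\mathrm{even}}(\qq^2)_*$ by the ideal generated by $x^m+y^m$, and then checking directly in the language of differential forms that no nontrivial combination of the $x^iy^ju$ lies in that ideal. Your argument instead builds an explicit $A$-module $V=V_0\oplus V_1$ on which $M_3(A)$ acts trivially and the operators $X^iY^jU$ separate the test vector $(1,0)$; the calculation $U=(f_0,f_1)\mapsto(0,\rho(f_0))$, $Y^k=(f_0,f_1)\mapsto(y^kf_0,\,y^kf_1-ky^{k-1}\rho(\partial_xf_0))$, and $(X^m+Y^m)=0$ all check out, and centrality of $U$ together with $U^2=0$ indeed forces $L_3$ (hence $M_3$) to act by zero. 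The paper's approach is shorter if one takes the Feigin--Shoikhet result as a black box and is more conceptual in that it identifies $N_2(A)$ with a concrete space of $2$-forms; your approach is fully self-contained, avoids any appeal to \cite{feigin2007}, and is arguably better suited to the integral refinements alluded to in the paper's subsequent remark. One small point: in your spanning argument you should say explicitly that the images $\pi(x^iy^j)$ for $0\le i<m$ are linearly \emph{independent} in $A/M_2(A)\cong\qq[x,y]/(x^m+y^m)$, not merely spanning, since that is what lets you conclude that any element of $\ker\pi$ has zero coefficients on the $x^iy^j$ part.
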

\begin{proof}
We only need to prove that they are linearly independent in $A/M_3(A)$. Rewrite $A/M_3(A)$ as $(A_2/M_3(A_2))/(x^m+y^m)$, where $(x^m+y^m)$ is the ideal generated by $(x^m+y^m)$ in $A_2/M_3(A_2)$, we claim that any nonzero linear combination of $x^iy^ju$ does not lie in $(x^m+y^m)$.

Recall that \cite{feigin2007} Lemma 2.1.2 showed there is an isomorphism of algebras $\varphi:A_2/M_3(A_2)\to\Omega^{\textrm{even}}(\qq^2)_*$, where $\Omega^{\textrm{even}}(\qq^2)_*$ is the algebra of even polynomial differential forms on $\qq^2$ with the twisted product $\alpha*\beta=\alpha\wedge\beta+(-1)^{\deg\alpha}\ud\alpha\wedge\ud\beta$. To be precise, $\varphi$ is defined by $\varphi(x)=x$, $\varphi(y)=y$.

Now everything can be translated into differential forms, and the verification of our claim in that context is straightforward.
\end{proof}
As a consequence, Proposition \ref{prop31} follows from dimensional counting of the basis $x^iy^ju~(i,j<m-1)$.
\begin{rmk}
If we work more carefully, we can figure out the torsion subgroups in $N_2(A)$ using the same method.
\end{rmk}
Furthermore, after taking a look at the data table of higher $N_k(A)$, say $k=3$, one may easily formulate the following conjecture:
\begin{conj}
For $A=\zz\langle x,y\rangle/(x^m+y^m)$, we have \[\textrm{Rank}(N_3(A)[d])=\begin{cases}3d-7 & 3\leq d\leq m+1 \\ 6m-3d+1 & m+2\leq d\leq 2m \\ 0 & \textrm{elsewhere}\end{cases}\]
\end{conj}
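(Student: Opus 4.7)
The plan is to imitate the proof of Proposition 3.3, working one degree higher in the lower central series. First tensor with $\qq$ to reduce to a rank computation in $A = \qq\langle x,y\rangle/(x^m+y^m)$, and set $u = [x,y]$, $v = [x,u]$, $w = [y,u]$; these, together with $u^2$, are the natural candidates for generators of $N_3(A)$. The first task is to prove a degree-$4$ analogue of Lemma \ref{lemma32}, namely that $v, w$, and $u^2$ are central modulo $M_4$ (a consequence of the Jacobi identity and the inclusion $M_iM_j \subseteq M_{i+j-1}$), and consequently that $\{x^iy^jv,\, x^iy^jw,\, x^iy^ju^2\}$ spans $N_3(A_2)$. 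Extending the Feigin--Shoikhet isomorphism of \cite{feigin2007} to the $M_4$ level, this spanning set is in fact a $\qq$-basis, so $N_3(A_2)$ is free of rank three over $\qq[x,y]$, and $\mathrm{Rank}(N_3(A_2)[d]) = 3d-7$ for $d \geq 3$. This already matches the conjecture in the range $d \leq m+1$, where the relation $f := x^m+y^m$ has not yet started cutting out ranks.

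The next step is to enumerate the relations imposed on $N_3(A)$ by $f = 0$. The naive relations $fv = fw = fu^2 = 0$ contribute only in degrees $\geq m+3$, so the drop in rank at $d = m+2$ must come from subtler identities: elements of the two-sided ideal $I = (f)$ that happen to lie in $M_3(A_2)$. The seed is that $[y,f] = [y, x^m] \equiv -mx^{m-1}u + \binom{m}{2} x^{m-2} v \pmod{M_4}$ lies in $I$, and hence so do $x \cdot [y,f]$ and $[y,f] \cdot x$; reducing each to normal form modulo $M_4$ and subtracting kills the $x^m u$ term and forces $x^{m-1}v = 0$ in $N_3(A)$, and symmetrically $y^{m-1}w = 0$. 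An analogous computation with $y\cdot[y,f]$ and $[y,f]\cdot y$ gives $x^{m-1}w = (m-1) x^{m-2} u^2$, and symmetrically $y^{m-1}v = -(m-1) y^{m-2} u^2$. Finally, the relation $x^{m-1}u \equiv \tfrac{m-1}{2} x^{m-2} v \pmod{I+M_4}$ (the content of $[y,f] \equiv 0$ itself), right-multiplied by $u$ and combined with $vu \in M_2 M_3 \subseteq M_4$, yields $x^{m-1}u^2 = 0$, and similarly $y^{m-1}u^2 = 0$.

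Together these identities allow every spanning element with $\max(i,j) \geq m-1$ to be rewritten in terms of $\{x^iy^jv,\, x^iy^jw,\, x^iy^ju^2 : 0 \leq i,j \leq m-2\}$, and a direct combinatorial count of such triples in each degree produces exactly $3d-7$ for $3 \leq d \leq m+1$ and $6m-3d+1$ for $m+2 \leq d \leq 2m$, matching the conjectured formula. The main obstacle is the final step: showing that the reduced set is genuinely linearly independent in $N_3(A)$, so that no further hidden relations exist. Following the Balagovic--Balasubramanian strategy \cite{balagovic2011}, I would construct an explicit realization of $A_2/M_4(A_2)$ (a twisted mixture of polynomial differential forms and polyvector fields on $\qq^2$ extending the Feigin--Shoikhet algebra) and verify that multiplication by $f$ in this model cuts out precisely the $\qq[x,y]$-submodule generated by the relations derived above. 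Building this explicit model and matching it against the commutator identities used here is the technical heart of the remaining work.
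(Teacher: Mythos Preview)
The statement you are attempting is labeled a \emph{Conjecture} in the paper and is left entirely unproven there; the authors offer it purely on the basis of Magma data. There is therefore no proof in the paper to compare against.

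Regarding your proposal on its own merits: the overall strategy is the natural one, your commutator computations modulo $M_4$ are correct, and the combinatorial count of the reduced spanning set $\{x^iy^jv,\, x^iy^jw,\, x^iy^ju^2 : 0 \le i,j \le m-2\}$ does match the conjectured rank in every degree. The free-algebra claim that $N_3(A_2)$ is free over $\qq[x,y]$ on $v,w,u^2$ is also correct and can be extracted from the known Jordan--H\"older decomposition of $N_3(A_2)$ (cf.\ \cite{kerchev2013}), so that part is not a serious obstacle. The genuine gap, which you yourself flag, is the final linear-independence step: one must show that the relations you derived from $[x,[y,f]]$, $[y,[x,f]]$, $[x,[x,f]]$, $[y,[y,f]]$ and from $[x,f]\cdot u$, $[y,f]\cdot u$ generate \emph{all} of the kernel of $N_3(A_2) \to N_3(A)$. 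Unlike the $N_2$ case, there is no Feigin--Shoikhet-style explicit realization of $A_2/M_4(A_2)$ in the literature against which to check this, and without such a model the step has no obvious attack. This is presumably precisely the obstacle that led the authors to leave the statement as a conjecture rather than a theorem.
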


Actually, we can compute the rank of $N_2[d]$ explicitly for $A=\zz\langle x,y\rangle/(f)$, where $f$ is an arbitrary homogeneous relation of degree $m$. Let $f_{\mathrm{ab}}$ be the abelianization of $f$, and denote $\partial f_{\mathrm{ab}}/\partial x$ and $\partial f_{\mathrm{ab}}/\partial y$ by $f_x$ and $f_y$ respectively. Instead of the relations in Lemma \ref{lemma32}, the following equations are satisfied in $A/M_3(A)$: \[u^2=[u,x]=[u,y]=f_xu=f_yu=0.\] As before, $\{x^iy^ju\}$ spans $N_2(A)$, and the only relations are \[f_xu=f_yu=0.\] The relation $f_{\mathrm{ab}}u=0$ is redundant because of the Euler identity \[m f_{\mathrm{ab}}=xf_x+yf_y.\] Therefore there is a degree-preserving bijection \[\{\textrm{basis of } \qq[x,y]/(f_x,f_y)\}\cdot u\longleftrightarrow \{\textrm{basis of }N_2(A)\}.\] In other words, we have \begin{equation}\label{*}\textrm{Rank}(N_2(A)[d])=\textrm{Rank}(\zz[x,y]/(f_x,f_y)[d-2]).\end{equation}

We may write $f_{ab}$ as product of linear polynomials of the form $\alpha x+\beta y$ (over $\mathbb{C}$). It is easy to check that \[(\alpha x+\beta y)^l||\gcd(f_x,f_y)~~\textrm{ if and only if }~~(\alpha x+\beta y)^{l+1}||f_{ab}.\] Now let $\{(\alpha_ix+\beta_iy)\}_{i=1}^s$ be the set of distinct linear factors of $f_{ab}$, and let $m_i$ be the multiplicity of $(\alpha_ix+\beta_iy)$, which satisfies\[\sum_{i=1}^sm_i=m.\] The least common multiple of $f_x$ and $f_y$ is given by \[\frac{f_xf_y}{\prod_{i=1}^s(\alpha_ix+\beta_iy)^{m_i-1}},\] which is of degree $m+s-2$. Combining it with Eq. (\ref{*}), we conclude:
\begin{thm}\label{thm37}
Let $A=\zz\langle x,y\rangle/(f)$, where $f$ is a homogeneous polynomial of degree $m$, let $s$ be the number of distinct linear factors of $f_{ab}$ (over $\mathbb{C}$). Then \[\textrm{Rank}(N_2(A)[d])=\begin{cases}0 & d=0\\ d-1 & 1\leq d\leq m-1 \\2m-d-1 & m\leq d\leq m+s-1 \\ m-s & d\geq m+s\end{cases}\]
\end{thm}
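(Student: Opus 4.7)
The plan is to use equation (\ref{*}), which already reduces the problem to computing the Hilbert function of $S := R/(f_x,f_y)$ in degree $d-2$, where $R := \qq[x,y]$; I then extract this Hilbert function from a short free resolution of $S$.

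Starting from the factorization $f_{\mathrm{ab}} = \prod_{i=1}^{s}(\alpha_i x+\beta_i y)^{m_i}$ already used in the excerpt, set $g := \gcd(f_x, f_y) = \prod_{i=1}^{s}(\alpha_i x+\beta_i y)^{m_i-1}$, so that $f_x = g\,a$ and $f_y = g\,b$ with $\gcd(a,b) = 1$ and $\deg a = \deg b = s-1$. Because $R$ is a UFD and $a, b$ are coprime, every syzygy of the pair $(f_x, f_y)$ is an $R$-multiple of $(b, -a)$, giving a graded free resolution
\[0 \longrightarrow R(-(m+s-2)) \longrightarrow R(-(m-1))^{\oplus 2} \longrightarrow R \longrightarrow S \longrightarrow 0,\]
where the first map sends $1 \mapsto (b, -a)$ and the second sends $(p, q) \mapsto p f_x + q f_y$. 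The twist $m+s-2$ of the kernel is exactly the degree of $\mathrm{lcm}(f_x, f_y)$ already computed in the excerpt. Taking the alternating sum of Hilbert series yields
\[H_S(t) = \frac{1 - 2t^{m-1} + t^{m+s-2}}{(1-t)^2}.\]

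Using $\frac{1}{(1-t)^2} = \sum_{n \geq 0}(n+1)t^n$, I would extract the coefficient of $t^{d-2}$ case by case. For $d - 2 < m - 1$ only the first summand contributes, yielding $d - 1$; for $m - 1 \leq d - 2 \leq m + s - 3$ the first two contribute, yielding $(d-1) - 2(d-m) = 2m - d - 1$; for $d - 2 \geq m + s - 2$ all three summands collapse to the constant $m - s$. Matching boundaries and noting that the first branch returns $0$ for $d \leq 1$ reproduces the four cases of Theorem \ref{thm37}. The only step carrying real content is the syzygy claim, which is a standard consequence of $R$ being a two-dimensional UFD together with the coprimality $\gcd(a, b) = 1$; this is also the point at which passing from $\zz$ to $\qq$ is essential, consistent with the theorem only asserting an equality of ranks.
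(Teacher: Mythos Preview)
Your argument is correct and follows the same route as the paper: both reduce via equation~(\ref{*}) to the Hilbert function of $\qq[x,y]/(f_x,f_y)$, and both identify the key numerical input as $\deg\mathrm{lcm}(f_x,f_y)=m+s-2$. The paper leaves the passage from this degree to the explicit piecewise formula implicit, whereas you spell it out cleanly via the free resolution and Hilbert series; the content is the same.
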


\begin{rmk}
The computation here is consistent with the results we get in Sections 4 and 5.
\end{rmk}

\section{Rank Stabilization}

In this section, we are going to study the $B_k$'s and $N_k$'s associated with $A=\zz\langle x,y\rangle/(x^m)$. It is essentially different from what we have seen in Sections 1 and 2: If we look at the data produced by Magma (see Table 2), we find $\textrm{Rank}(B_k(A)[d])$ stabilizes at some positive integer for any $k$ when $d$ is large enough. On the other hand, the $B_k$'s of $\zz\langle x,y\rangle/(x^m+y^m)$ are finite dimensional. We will say more about it in the next section.

\begin{table}[h]
\centering
\begin{tabular}{l|cccccccc}
\hline
r$(B_k[d])$ & 2 & 3 & 4 & 5 & 6 & 7 & 8 & 9\\
\hline
$B_2$ & 1 & 2 & 2 & 2 & 2 & 2 & 2 & 2\\
$B_3$ & & 2 & 4 & 4 & 4 & 4 & 4 & 4\\
$B_4$ & & & 3 & 7 & 8 & 8 & 8 & 8\\
$B_5$ & & & & 6 & 13 & 16 & 16 & 16\\
\hline
\end{tabular}
\caption{The ranks of $B_k(A)[d]$ for $A=\zz\langle x,y\rangle/(x^3)$.}\label{table2}
\end{table}

To be more precise, we have the following theorem:
\begin{thm}\label{thm41}
Let $A=\zz\langle x,y\rangle/(x^m)$, for each $k\geq 2$, there exists some $l=l(k)\in\zz_{\geq0}$ such that $\textrm{Rank}(B_k[l])=\textrm{Rank}(B_k[j])$ for any $j\geq l$. Furthermore, if $k=2$, then $l\leq m$; if $k\geq 3$, then $l\leq 2k+m-5$.
\end{thm}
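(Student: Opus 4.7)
Since only ranks are at stake, I tensor with $\qq$ and work throughout with $A = \qq\langle x,y\rangle/(x^m)$; write $A_2 = \qq\langle x,y\rangle$ and $u = [x,y]$. The quotient $A_2 \twoheadrightarrow A$ induces surjections $B_k(A_2) \twoheadrightarrow B_k(A)$ whose kernels are the images of $L_k(A_2) \cap (x^m)$ in $B_k(A_2)$.

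For the case $k = 2$ I would follow the template of Section 3. Using the identification of Feigin and Shoikhet $A_2/M_3(A_2) \cong \Omega^{\textrm{even}}(\qq^2)_*$, the classes $x^a y^b u$ provide graded representatives for $B_2(A_2)$ having the expected rank $d-1$ in degree $d$. The crucial direct computation is
\[ [y^j x^i,\, x^m] \equiv -m j \, x^{m+i-1} y^{j-1} u \pmod{L_3(A_2)},\qquad i\geq 0,\ j\geq 1, \]
which forces $x^a y^b u = 0$ in $B_2(A)$ whenever $a \geq m-1$. A rank count (or direct verification in the $\Omega^{\textrm{even}}(\qq^2)_*$ model, in the spirit of Proposition \ref{prop31}) shows these are all the relations, yielding $\textrm{Rank}(B_2(A)[d]) = \min(d-1, m-1)$, and hence $l(2) \leq m$ with stable value $m-1$.

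For $k \geq 3$ I would proceed by induction on $k$. The fact that $[L_i, L_j] \subset L_{i+j}$ (standard from Jacobi) makes the bracket descend to a surjection $B_1(A) \otimes B_{k-1}(A) \twoheadrightarrow B_k(A)$, and $B_1(A) = A^{\textrm{ab}} = \qq[x,y]/(x^m)$ has eventually constant rank $m$. Combined with the inductive bound on $B_{k-1}$, stabilization of $B_k$ reduces to a kernel-stability statement for this surjection.

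The main obstacle is precisely the kernel analysis: the tensor product on the left has rank growing linearly in $d$, so the eventual constancy of $\textrm{Rank}(B_k(A)[d])$ forces an explicit and uniformly increasing family of relations to appear in $B_k(A)$. I would mimic the $k=2$ step by evaluating $[w, x^m] \pmod{L_{k+1}(A_2)}$ for iterated commutators $w$ of depth $k-1$ (expressed in a Hall-like basis of $B_{k-1}(A_2)$), and tracking how the interplay of the Jacobi identity with the relation $x^m = 0$ eliminates basis elements of sufficiently high $x$-degree. Each new layer of bracket nesting introduces a degree shift of $+2$ — reflecting the role of $u$ of degree $2$ as the fundamental Heisenberg-like commutator generator noted in the remark after Lemma \ref{lemma32} — which accounts for the $+2$-per-step increment in the claimed bound $l(k) \leq 2k + m - 5$. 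The $k = 3$ base step, giving the slightly sharper $l(3) \leq m+1$ rather than $m+2$, should exploit directly the Heisenberg structure in $A_2/M_3$, since at depth $2$ one can still work with an explicit closed-form basis before the combinatorics of nested commutators takes over. Carrying out this bookkeeping uniformly in $k$ — and in particular verifying that no hidden relations force the stabilization degree below the expected bound, nor any missing ones push it above — is where I expect the real technical difficulty to lie.
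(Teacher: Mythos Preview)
Your $k=2$ outline is fine and close in spirit to what the paper does: both rest on the Feigin--Shoikhet identification $A_2/M_3(A_2)\cong\Omega^{\textrm{even}}(\qq^2)_*$, and your commutator computation does show that passing to $A_2/(x^m)$ kills exactly the classes $x^ay^b\,\ud x\wedge\ud y$ with $a\geq m-1$, whence stable rank $m-1$ from degree $m$.

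For $k\geq 3$ there is a genuine gap, and the paper proceeds by an entirely different mechanism. Your induction through $B_1(A)\otimes B_{k-1}(A)\twoheadrightarrow B_k(A)$ puts the whole burden on the kernel, for which you give only a heuristic. Two concrete obstacles: first, $B_k(A_2)$ is \emph{not} a graded piece of the free Lie algebra---e.g.\ $\dim B_2(A_2)[d]=d-1$, not $1$---so there is no Hall-type basis of depth-$k$ commutators to track, and the relations produced by $[w,x^m]$ have no manageable combinatorial form for general $k$; second, even if stabilization were granted, nothing in your setup produces the specific bound $2k+m-5$, and the ``$+2$ per layer'' heuristic has no mechanism backing it. The paper instead observes that $W_1=\qq[y]\partial_y\subset W_2$ still acts on $B_k(A)$, and that $B_k(A)$ is a $W_1$-quotient of $B_k(A_2)/(x^mW_2)B_k(A_2)$. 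A snake-lemma d\'evissage along the finite $W_2$-Jordan--H\"older series of $B_k(A_2)$ reduces to computing $F_{(p,q)}/(x^mW_2)F_{(p,q)}$ for each irreducible subquotient, which decomposes explicitly as $\bigoplus_{l=0}^{m-2}\bigoplus_{j=q}^{p}x^l(\ud x)^{p+q-j}F_j$ with $F_j=\qq[y](\ud y)^j$. Finite $W_1$-length gives stabilization immediately, and the bound $l\leq 2k+m-5$ then drops out of $p+q\leq 2k-3$ (the Arbesfeld--Jordan estimate, Proposition~\ref{prop44}) together with $l\leq m-2$. That degree estimate on the $W_2$-composition factors is the real source of the $2k$, and your approach has no substitute for it.
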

Again, we may work over $\qq$. To simplify our notation, $A_n$ means $A_n(\qq)$ in this section. The proof of our theorem relies heavily on the following results:
\begin{prop}(\cite{feigin2007})
For the free associative algebras $A=A_n$, $B_k(A_n)$ has a natural $W_n$-module structure for $k\geq2$. Here $W_n=\bigoplus_{i=1}^n\qq[x_1,\dots,x_n]\partial_i$ is the Lie algebra of polynomial vector fields over $\qq^n$.
\end{prop}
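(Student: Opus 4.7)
The plan is to construct the action by lifting polynomial vector fields to derivations of the free algebra $A_n$, show that the choice of lift becomes immaterial modulo $L_{k+1}$ when $k\geq 2$, and check Lie bracket compatibility.

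\emph{Construction.} For a basic vector field $v=f\partial_i\in W_n$, choose any preimage $\tilde f\in A_n$ of $f$ under the abelianization map $\pi\colon A_n\twoheadrightarrow\qq[x_1,\dots,x_n]$; the symmetrization map provides a canonical $\qq$-linear section if one prefers. Since $A_n$ is free associative, there is a unique derivation $D_{v,\tilde f}\colon A_n\to A_n$ determined by $D(x_i)=\tilde f$ and $D(x_j)=0$ for $j\neq i$. Extend $\qq$-linearly in $v$.

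\emph{Preservation of the lower central series.} Any derivation of the associative structure is automatically a derivation of the commutator Lie structure, so $D([a,b])=[D(a),b]+[a,D(b)]$. Induction on $k$ yields $D(L_k(A_n))\subseteq L_k(A_n)$, and hence $D$ descends to a $\qq$-linear endomorphism $\overline{D}_v$ of each $B_k(A_n)$.

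\emph{Well-definedness on $B_k$.} Two lifts $\tilde f,\tilde f'$ of the same $f$ differ by an element of $\ker\pi=M_2(A_n)$, so the corresponding difference derivation $D'$ satisfies $D'(A_n)\subseteq M_2$. The technical crux is the assertion that every derivation $D'$ of $A_n$ with $D'(A_n)\subseteq M_2(A_n)$ satisfies $D'(L_k(A_n))\subseteq L_{k+1}(A_n)$ for all $k\geq 2$. The proof proceeds by induction on $k$, combining the bracket product rule $[a,bc]=[a,b]c+b[a,c]$ with the inclusions $[A_n,L_k]\subseteq L_{k+1}$ and $[L_i,L_j]\subseteq L_{i+j}$ to iteratively absorb an $L_2$-valued factor into the nested commutator structure; the extra commutator depth available when $k\geq 2$ is precisely what allows the absorption to land in $L_{k+1}$. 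Granted this, $\overline{D}_v\in\mathrm{End}_{\qq}(B_k(A_n))$ is independent of the lift, yielding a well-defined $\qq$-linear map $W_n\to\mathrm{End}_{\qq}(B_k(A_n))$, $v\mapsto\overline{D}_v$.

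\emph{Lie bracket compatibility.} For $v=f\partial_i$, $w=g\partial_j$, the vector-field bracket is $[v,w]=f\partial_i(g)\partial_j-g\partial_j(f)\partial_i$. A direct computation of $[D_v,D_w]$ as a derivation on the generators gives $[D_v,D_w](x_l)=\delta_{l,i}D_w(\tilde f)-\delta_{l,j}D_v(\tilde g)$, whose abelianization equals $\delta_{l,i}\,g\partial_j(f)-\delta_{l,j}\,f\partial_i(g)=\pi(D_{[v,w]}(x_l))$. Thus the two derivations $[D_v,D_w]$ and $D_{[v,w]}$ agree on generators modulo $M_2$, so by the well-definedness result above they induce the same operator on $B_k(A_n)$ for $k\geq 2$. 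Combined with $\qq$-linearity in $v$, this exhibits $W_n\to\mathrm{End}_{\qq}(B_k(A_n))$ as a Lie algebra homomorphism, i.e.\ the desired $W_n$-module structure.

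\emph{Main obstacle.} The inductive verification of $D'(L_k)\subseteq L_{k+1}$ for every derivation $D'$ with $D'(A_n)\subseteq M_2$ is the technical heart of the argument, and it is exactly where the restriction $k\geq 2$ enters; without this clearance, neither the independence of the lift nor the Lie bracket identity would descend cleanly to $B_k(A_n)$.
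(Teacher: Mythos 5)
You are proving a statement that this paper does not prove at all: Proposition 4.2 is imported verbatim from \cite{feigin2007} and used as a black box in the proof of Theorem \ref{thm41}, so the only meaningful comparison is with the argument in that source. Your overall architecture (lift vector fields to derivations of $A_n$, descend to $B_k$) does match Feigin and Shoikhet's, but the step you yourself flag as the technical crux is not merely unproven --- it is false as stated, and the construction collapses without it.

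The claim that every derivation $D'$ of $A_n$ with $D'(A_n)\subseteq M_2(A_n)$ satisfies $D'(L_k)\subseteq L_{k+1}$ for $k\geq2$ has a concrete counterexample. In $A_5$, the monomial $x_2x_3x_4$ admits the two lifts $x_2x_3x_4$ and $x_2x_4x_3$; their difference produces the derivation $D'$ with $D'(x_1)=x_2[x_3,x_4]\in M_2$ and $D'(x_j)=0$ otherwise, and
\[D'([x_1,x_5])=\bigl[\,x_2[x_3,x_4],\,x_5\,\bigr]\]
maps to a nonzero multiple of $\ud x_2\wedge\ud x_3\wedge\ud x_4\wedge\ud x_5$ under the isomorphism $A_5/M_3(A_5)\cong\Omega^{\mathrm{even}}(\qq^5)_*$ quoted in Section 3 of this paper; hence it does not lie in $M_3\supseteq L_3$, and your two lifts induce genuinely different endomorphisms of $B_2(A_5)$. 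The Leibniz induction you sketch cannot repair this: it needs $[M_2,L_{k-1}]\subseteq L_{k+1}$, a gain of \emph{two} in commutator depth, while each inclusion you invoke gains only one, and the Bapat--Jordan inclusion $[M_j,L_k]\subseteq L_{k+j}$ of Lemma \ref{lemma52} is available only for odd $j$. What actually makes the construction work in \cite{feigin2007} is the canonical symmetrization lift: it induces precisely the Lie derivative on $A_n/M_3(A_n)\cong\Omega^{\mathrm{even}}(\qq^n)_*$, so the error derivation $[\tilde v,\tilde w]-\widetilde{[v,w]}$ takes values in $M_3$ rather than merely $M_2$, and derivations valued in $M_3$ do annihilate $B_k$ for $k\geq2$ (via $[M_3,A]\subseteq M_3\cap L_2=L_3$ together with $[M_3,L_j]\subseteq L_{j+3}$). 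Your proof needs this finer control over the ambiguity of the lift; without it, both the well-definedness and the bracket-compatibility steps fail.
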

\begin{prop}(\cite{rudakov1974},\cite{dobrovolskae2008})\label{prop43}
Each $B_k(A_n)$ has a finite length Jordan-H\"older series with respect to the $W_n$-action. Moreover, all the irreducible subquotients of $B_k(A_n)$ is of the form $F_\lambda$, where $F_\lambda$ are certain tensor field modules parameterized by Young diagrams $\lambda=(\lambda_1,\dots,\lambda_n)$.
\end{prop}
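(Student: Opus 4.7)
The plan is to realize each $B_k(A_n)$ as an object in the category of graded $W_n$-modules with finite-dimensional homogeneous components of polynomial growth, and then apply Rudakov's classification of simple objects in that category together with a ``singular vector'' argument to bound the Jordan--H\"older length.

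First I would fix the standard grading $W_n=\bigoplus_{d\geq-1}W_n^{(d)}$, where $\deg(x^{\alpha}\partial_i)=|\alpha|-1$, so that $W_n^{(0)}\cong\mathfrak{gl}_n$, $W_n^{(-1)}\cong\qq^n$ is abelian, and $W_n^{(\geq1)}$ is a nilpotent ideal in $W_n^{(\geq 0)}$. The Feigin--Shoikhet action is compatible with the polynomial grading on $A_n$, so $B_k(A_n)$ is a graded $W_n$-module whose homogeneous piece $B_k(A_n)[j]$ is a finite-dimensional $\mathfrak{gl}_n$-module; an easy induction on $k$, using the bracket map $A_n\otimes L_{k-1}(A_n)\to L_k(A_n)$, gives a polynomial bound $\dim B_k(A_n)[j]=O(j^{N(k,n)})$. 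This places $B_k(A_n)$ inside the category Rudakov studies.

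The classification half of the proposition is then immediate. Rudakov's theorem asserts that every simple object of this category is (up to the distinguished subquotients coming from the de Rham complex) a tensor-field module $F_\lambda=\mathrm{Coind}_{W_n^{(\geq 0)}}^{W_n}V_\lambda$, where $V_\lambda$ is the irreducible $\mathfrak{gl}_n$-module of highest weight $\lambda$ extended trivially across $W_n^{(\geq 1)}$. Applying this to every simple subquotient of $B_k(A_n)$ gives the second assertion. For finite length, I would work with the space of singular vectors $V_k:=B_k(A_n)^{W_n^{(\geq 1)}}$: each simple tensor-field subquotient $F_\lambda$ contributes a copy of $V_\lambda$ inside its minimal nonzero graded piece, so the total Jordan--H\"older multiplicity of $F_\lambda$ in $B_k(A_n)$ is bounded by the $V_\lambda$-isotypic dimension of $V_k$, and the JH length is bounded by $\dim V_k$.

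The main obstacle is thus showing that $V_k$ is finite-dimensional. My preferred route is to identify $V_k$ with an invariant subspace of a finite-dimensional quotient of $A_n/M_{k+1}(A_n)$: the $W_n^{(\geq 1)}$-invariance conditions translate into finitely many ``differential'' relations generated by elements such as $x_ix_j\partial_\ell$, which force the singular vectors to be supported in bounded polynomial degree, after which the growth bound from the first paragraph closes the argument. An abstract alternative is to compute $H^\bullet(W_n^{(\geq 0)},B_k(A_n))$ via a Koszul-type resolution and show concentration in finitely many degrees, but the direct invariant-theoretic route seems more tractable and is where I expect the technical core of the proof to live.
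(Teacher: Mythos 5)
First, a point of reference: the paper contains no proof of Proposition \ref{prop43}. It is quoted verbatim from the literature (Rudakov 1974; Dobrovolska--Etingof 2008) and used as a black box in the proof of Theorem \ref{thm41}. So your proposal is not competing with an in-paper argument; it is an attempted reconstruction of the cited results, and the classification half of it --- once $B_k(A_n)$ is placed in the category Rudakov studies, every irreducible subquotient is a tensor field module $F_\lambda$ (up to the degenerate de Rham cases, which you rightly flag) --- is indeed how that half goes.

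The finite-length half, however, has two genuine gaps. (i) The polynomial growth bound $\dim B_k(A_n)[j]=O(j^{N})$ is not ``an easy induction on $k$'': the bracket map only bounds $\dim L_k[j]$ by quantities involving $\dim A_n[j']=n^{j'}$, which is exponential, and nothing in the inductive setup makes the quotient $L_k[j]/L_{k+1}[j]$ polynomially small. That bound is essentially the main theorem of the cited Dobrovolska--Etingof paper, proved structurally: Feigin--Shoikhet's isomorphism $A_n/M_3(A_n)\cong\Omega^{\mathrm{even}}(\qq^n)_*$ makes $B_k$ (for $k\geq2$) a finitely generated module over $\Omega^{\mathrm{even}}$, hence over $\qq[x_1,\dots,x_n]$, and finite length with $F_\lambda$ subquotients then follows from Rudakov's theory for $W_n$-modules coherent over the polynomial ring. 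You cannot take the growth bound as an input, since it is essentially equivalent to what is being proved. (ii) The inequality ``Jordan--H\"older length $\leq\dim V_k$'' with $V_k=B_k(A_n)^{W_n^{(\geq1)}}$ is false in this (non-semisimple) category, because a singular vector of a subquotient need not lift: already $F_{(0,\dots,0)}=\qq[x_1,\dots,x_n]$ has a one-dimensional space of $W_n^{(\geq1)}$-invariants (the constants, since $x_ix_j\partial_\ell f=0$ for all $i,j,\ell$ forces $f\in\qq$) yet at least two composition factors ($\qq$ and $\qq[x_1,\dots,x_n]/\qq\cong\Omega^1_{\mathrm{closed}}$). So even granting $\dim V_k<\infty$ --- which your sketch defers to a vague ``finitely many differential relations'' step --- finite length would not follow. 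The reliable route is the coherence argument above, not singular-vector counting.
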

For $n=2$, we have the following estimate on $|\lambda|$ for those $F_\lambda$ appearing in the Jordan-H\"older series of $B_k(A_2)$:
\begin{prop}(\cite{feigin2007}, \cite{arbesfeld2010})\label{prop44}
Let $k\geq3$, for $F_\lambda$ in the Jordan-H\"older series of $B_k(A_2)$, \[|\lambda|:=\lambda_1+\lambda_2\leq2k-3.\] For $k=2$, $B_2(A_2)\cong F_{(1,1)}$ as $W_2$-modules.
\end{prop}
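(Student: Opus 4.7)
The plan is to split into the two cases stated in the proposition.

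For $k=2$: I would invoke the Feigin-Shoikhet isomorphism $B_2(A_n)\cong\Omega^{\mathrm{even}>0}_{\mathrm{closed}}(\qq^n)$ of $W_n$-modules. When $n=2$, every polynomial 2-form is automatically closed and there are no higher positive even forms, so the right-hand side collapses to $\Omega^2(\qq^2)=\qq[x,y]\,dx\wedge dy$ equipped with the Lie-derivative $W_2$-action. The formula $\mathcal{L}_v(f\,dx\wedge dy)=(v(f)+f\cdot\mathrm{div}\,v)\,dx\wedge dy$ exactly matches the action on the tensor field module $\qq[x,y]\otimes V_{(1,1)}$, where $V_{(1,1)}=\det$ is the one-dimensional determinant representation of $\mathfrak{gl}_2$. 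Hence $B_2(A_2)\cong F_{(1,1)}$.

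For $k\geq 3$: The key fact is that each $F_\mu=\qq[x,y]\otimes V_\mu$ is $W_2$-generated by its minimal-degree layer $V_\mu$, which sits in degree $|\mu|$. Therefore, if $B_k(A_2)$ is $W_2$-generated in degrees $\leq D$, then by Proposition \ref{prop43} every subquotient $F_\mu$ in its Jordan-H\"older series must satisfy $|\mu|\leq D$. The task thus reduces to showing that $B_k(A_2)$ is $W_2$-generated in degrees at most $2k-3$.

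I would establish this by induction on $k$. The base case $k=3$ is a finite computation: by Witt's formula $\dim L_3(A_2)[3]=\tfrac{1}{3}(2^3-2)=2$, and one checks that this 2-dimensional piece, which is isomorphic to $V_{(2,1)}$ as a $\mathfrak{gl}_2$-module, already $W_2$-generates the entire $B_3(A_2)$. For the inductive step, the bracket surjection $A_2\otimes L_{k-1}(A_2)\twoheadrightarrow L_k(A_2)$ together with the Jacobi identity allows $W_2$-generators of $B_{k-1}(A_2)$ (in degrees $\leq 2(k-1)-3=2k-5$) to be promoted to $W_2$-generators of $B_k(A_2)$ with a controlled degree shift of at most $2$, yielding the bound $2k-3$.

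The main obstacle is the sharp inductive degree-shift estimate. A naive use of the bracket $[x,-]$ or $[y,-]$ raises degree by only $1$, but one must carefully account for how the $W_2$-action by Lie derivation interacts with the bracket structure, ensuring that together they generate $B_k[d]$ from strictly lower-degree elements for every $d\geq 2k-2$. Making this precise requires a Jacobi-identity rewriting argument in the spirit of Feigin-Shoikhet and Arbesfeld-Jordan, and is the technically delicate part of the argument.
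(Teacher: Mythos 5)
First, a remark on the ground truth: the paper offers no proof of this proposition at all --- it is imported verbatim from \cite{feigin2007} and \cite{arbesfeld2010} as an input to Theorem \ref{thm41} --- so there is nothing internal to compare your argument against, and a genuine proof would have to reproduce the content of those references. Your $k=2$ case is correct and is the standard argument: on $\qq^2$ the only forms of positive even degree are the $2$-forms, all of which are closed, so the Feigin--Shoikhet isomorphism collapses to $\Omega^2(\qq^2)=\qq[x,y]\,\ud x\wedge\ud y\cong F_{(1,1)}$ with the Lie-derivative action.

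The $k\geq3$ case, however, has a genuine gap at the very first reduction. The implication ``$M$ is $W_2$-generated in degrees $\leq D$ $\Rightarrow$ every Jordan--H\"older constituent $F_\mu$ of $M$ satisfies $|\mu|\leq D$'' is not automatic: a \emph{submodule} of a module generated in low degree need not be generated in low degree. Concretely, whenever there is a non-split extension $0\to F_\mu\to M\to F_\lambda\to 0$ of irreducible tensor modules with $|\mu|>|\lambda|$, the submodule of $M$ generated by $M[|\lambda|]\cong V_\lambda$ surjects onto $F_\lambda$ and meets $F_\mu$ nontrivially (else the sequence would split), hence equals $M$; so $M$ is generated in degree $|\lambda|$ yet contains the constituent $F_\mu$ with $|\mu|>|\lambda|$. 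To use your reduction you would therefore have to prove a vanishing statement for such extensions (equivalently, control the singular vectors \`a la Rudakov in the modules generated by the low-degree part of $B_k$), and nothing of the sort is supplied. Second, even granting the reduction, the inductive step is not an argument: the claim that generators of $B_{k-1}(A_2)$ can be ``promoted'' to generators of $B_k(A_2)$ with a degree shift of at most $2$ is precisely the quantitative content of the Arbesfeld--Jordan bound, and you give no mechanism for it (a single bracket $[x_i,-]$ shifts degree by $1$, and why the correct increment per step is $2$ rather than $1$, and why the resulting elements together with the $W_2$-action really exhaust $B_k[d]$ for $d\geq 2k-2$, is exactly what needs proof). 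The cited sources proceed differently, by producing explicit spanning sets for $B_k$ as a module over $A_2/M_3(A_2)\cong\Omega^{\mathrm{even}}(\qq^2)_*$ and analyzing the resulting decomposition. As written, only the $k=2$ half of the statement is established.
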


Now let us prove Theorem \ref{thm41}.
\begin{proof}
Let $\qq[y]\partial_y=:W_1\subset W_2=\qq[x,y]\partial_x\oplus\qq[x,y]\partial_y$ be the Lie subalgebra of $W_2$. Clearly $B_k(A_2)$ has a $W_1$-module structure. Now we replace our associative algebra by $A=A_2/(x^m)$, the $W_2$-action does not exist any more, however, the $W_1$-action remains.

Claim: $B_i(A_2/(x^m))$ is of finite length as a $W_1$-module.

Let $\pi:B_k(A_2)\to B_k(A_2/(x^m))$ be the canonical map induced by the quotient $A_2\to A_2/(x^m)$, clearly, $\pi$ preserves the $W_1$-action. As $[W_1,x^mW_2]\subset x^mW_2$, $(x^mW_2)B_k(A_2)$ is a $W_1$-submodule of $B_k(A_2)$. Moreover, $\pi$ maps $(x^mW_2)B_k(A_2)$ to 0, and since $\pi$ is surjective, we conclude that $B_k(A_2/(x^m))$ is a quotient module of $B_k(A_2)/(x^mW_2)B_k(A_2)$. Therefore we only have to show that, as a $W_1$-module, $B_k(A_2)/(x^mW_2)B_k(A_2)$ is of finite length.

Recall that by Proposition \ref{prop43}, $B_k(A_2)$ is a finite-length $W_2$-module. Therefore there exists a sequence of $W_2$-modules \[0=M_n\subset M_{n-1}\subset\dots\subset M_1\subset M_0=B_k(A_2)\] such that each $P_j:=M_j/M_{j+1}$ is an irreducible $W_2$-module. Now we have the following commutative diagram: \[\xymatrix{0\ar[r] & (x^mW_2)M_{j+1}\ar[r]\ar[d] & (x^mW_2)M_j\ar[r]\ar[d] & (x^mW_2)M_j/(x^mW_2)M_{j+1}\ar[r]\ar[d]^f & 0 \\ 0\ar[r] & M_{j+1}\ar[r] & M_j\ar[r] & P_j\ar[r] & 0}\] where the rows are exact and the first two vertical arrows are inclusions. The snake lemma gives us the long exact sequence \[0\to\ker f\to M_{j+1}/(x^mW_2)M_{j+1}\to M_j/(x^mW_2)M_j\to P_j/(x^mW_2)P_j\to0.\] By induction on $j$, we only have to show that each $P_j/(x^mW_2)P_j$ is of finite length.

We know that each $P_j$ is of the form $F_{(p,q)}$ ($q\leq p$) and \[F_{(p,q)}=\qq[x,y]\otimes\mathrm{Sym}^{p-q}(\ud x,\ud y)\otimes(\ud x\wedge\ud y)^{\otimes q},\] where $\mathrm{Sym}(\ud x,\ud y)$ is the symmetric algebra generated by differentials $\ud x$ and $\ud y$. Moreover, $W_2$ acts on $F_{(p,q)}$ as Lie derivatives. To make the grading correct, $\ud x$ and $\ud y$ are each assigned to be of degree 1.

By direct computation, we see that $(x^mW_2)F_{(p,q)}=x^{m-1}F_{(p,q)}$, therefore \[F_{(p,q)}/(x^mW_2)F_{(p,q)}\cong\bigoplus_{l=0}^{m-2}\bigoplus_{j=q}^px^l(\ud x)^{p+q-j}F_j,\] where $F_j=\qq[y](\ud y)^j$ is an irreducible $W_1$-module. As a consequence, our claim is proven.

Furthermore, $F_j[d]$ is of dimension 1 as $d\geq j$, thus $x^l(\ud x)^{p+q-j}F_j[d]$ is of dimension 1 as $d\geq p+q+l$. Theorem \ref{thm41} follows from Proposition \ref{prop44} automatically.
\end{proof}

From Table \ref{table2} we cannot determine the Jordan-H\"older series (in terms of $W_1$-module) of $B_k(A_2/(x^m))$, because both $x$ and $\ud x$ carry degree 1, we cannot read off $j$ from the degree distribution of $x^l(\ud x)^{p+q-j}F_j$. However, if we compute ranks of $B_k(A_2/(x^m))$ refined by bidegree, we can deduce $j$ from data, thus determine the Jordan-H\"older series.

As a result, we get:
\begin{prop}
As $W_1$-modules, the Jordan-H\"older series of $B_k(A_2/(x^m))$ can be determined:
\[\begin{split}&B_2(A_2/(x^2))\sim F_1[1],\\ &B_3(A_2/(x^2))\sim F_1[2]+F_2[1],\\ &B_4(A_2/(x^2))\sim F_2[2]+F_3[1]+F_3[2],\\ &B_2(A_2/(x^3))\sim F_1[1]+F_1[2],\\ &B_3(A_2/(x^3))\sim F_1[2]+F_1[3]+F_2[1]+F_2[2],\\ &B_4(A_2/(x^3))\sim F_1[3]+F_2[2]+2F_2[3]+F_3[1]+2F_3[2]+F_3[3],\\ &B_2(A_2/(x^4))\sim F_1[1]+F_1[2]+F_1[3],\\ &B_3(A_2/(x^4))\sim F_1[2]+F_1[3]+F_1[4]+F_2[1]+F_2[2]+F_2[3],\\ &B_4(A_2/(x^4))\sim F_1[3]+F_1[4]+F_2[2]+2F_2[3]+2F_2[4]+F_3[1]+2F_3[2]+2F_3[3]+F_3[4],\\&......\end{split}\] Here, $F_j[d]$ denotes the $W_1$-module $F_j$ which begins with total degree $d$ in $x$ and $\ud x$.
\end{prop}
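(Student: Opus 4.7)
The plan is to combine the filtration produced in the proof of Theorem~\ref{thm41} with a bigraded Magma computation that refines the single total-degree rank data of Table~\ref{table2}. Specifically, the proof of Theorem~\ref{thm41} shows both that $B_k(A_2/(x^m))$ is a $W_1$-module quotient of $B_k(A_2)/(x^mW_2)B_k(A_2)$, and that each $W_2$-irreducible subquotient $F_{(p,q)}$ of $B_k(A_2)$ contributes
\[F_{(p,q)}/(x^mW_2)F_{(p,q)} \cong \bigoplus_{l=0}^{m-2}\bigoplus_{j=q}^p x^l(\ud x)^{p+q-j}F_j\]
to the $W_1$-Jordan-H\"older content. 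This yields a finite candidate list of composition factors $F_j[d]$ with $d = l+(p+q-j)$, indexed by the $W_2$-Jordan-H\"older series of $B_k(A_2)$.

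Concretely, I would proceed as follows. First, tabulate the $W_2$-Jordan-H\"older constituents $F_{(p,q)}$ of $B_k(A_2)$ for the cases $k\leq 4$ at issue: for $k=2$ this is $F_{(1,1)}$ by Proposition~\ref{prop44}, while for $k=3,4$ the bound $|\lambda|\leq 2k-3$ of Proposition~\ref{prop44} confines the constituents to finitely many candidates which are either read from \cite{feigin2007,arbesfeld2010} or pinned down by a bigraded Magma rank computation on the free algebra $A_2$ itself. Second, expand each $F_{(p,q)}/(x^mW_2)F_{(p,q)}$ using the displayed formula, listing the candidate $W_1$-factors $F_j[d]$ together with their shifts. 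Third, compute with Magma the $(x\text{-deg},y\text{-deg})$-bigraded Hilbert series of $B_k(A_2/(x^m))$ for each $(k,m)$ and compare against the candidate character.

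The key observation that makes the matching work is that $x^l(\ud x)^{p+q-j}F_j$ is supported exactly at $x$-degree $d=l+p+q-j$ with $y$-degrees $j, j+1, j+2, \dots$ each of dimension one, so the bigraded character separates the pairs $(j,d)$ and the bigraded Magma output determines their multiplicities uniquely. The main obstacle is that the surjection $B_k(A_2)/(x^mW_2)B_k(A_2) \twoheadrightarrow B_k(A_2/(x^m))$ need not be an isomorphism a priori: some of the candidate factors may drop out, so the candidate multiset produced in step two is only an upper bound. Once the bigraded Magma rank in $B_k(A_2/(x^m))$ agrees on the nose with the predicted bigraded character of the asserted sum $\sum F_j[d]$, this upper bound is attained, no factor is killed, and the Jordan-H\"older series is determined exactly as claimed.
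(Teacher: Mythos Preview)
Your approach is correct and is essentially the same as the paper's: the paper's argument is precisely the paragraph preceding the proposition, which says that once one refines the Magma rank computation by bidegree, the pair $(j,d)$ in each factor $x^l(\ud x)^{p+q-j}F_j$ can be read off, and hence the $W_1$--Jordan--H\"older series is determined. Your write-up spells out the same mechanism in more detail (candidate list from the $W_2$-series of $B_k(A_2)$, linear independence of the bigraded characters of the $F_j[d]$, then Magma matching); the only quibble is the phrase ``no factor is killed,'' which need not hold literally against the full candidate list for $k\ge 4$, but this does not affect the argument since the bigraded character alone already pins down the multiset of $F_j[d]$'s.
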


\begin{rmk}
Naturally, the analogous statement of Theorem \ref{thm41} for $N_k(A_2/(x^m))$ can be carried out without any significant change. The only difference is that we need to replace Proposition \ref{prop44} by Kerchev's result \cite{kerchev2013}. If we combine this reasoning with the computation in Theorem \ref{thm37}, we actually prove:
\begin{prop}
As a $W_1$-module, the Jordan-H\"older series of $N_2(A_2/(x^m))$ is \[N_2(A_2/(x^m))\sim F_1[1]+F_1[2]+\dots+F_1[m-1].\]
\end{prop}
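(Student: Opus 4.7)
The plan is to mimic the proof of Theorem~\ref{thm41} with $B_k$ replaced by $N_2$, use a known $W_2$-module description of $N_2(A_2)$ to identify the relevant quotient explicitly, and then upgrade the resulting $W_1$-surjection to an isomorphism by matching graded ranks via Theorem~\ref{thm37}. The first step is to identify $N_2(A_2)$ as a $W_2$-module: under the Feigin--Shoikhet isomorphism $A_2/M_3(A_2)\cong\Omega^{\mathrm{even}}(\qq^2)_*$ (already used in Section~3), the kernel of the abelianization map $A_2/M_3(A_2)\to A_2/M_2(A_2)\cong\qq[x,y]$ is precisely the space of polynomial top forms $\qq[x,y]\,\ud x\wedge\ud y$, so $N_2(A_2)\cong F_{(1,1)}$; this is also the $k=2$ case of Kerchev's description. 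Exactly as in the proof of Theorem~\ref{thm41}, the canonical surjection $N_2(A_2)\twoheadrightarrow N_2(A_2/(x^m))$ is $W_1$-equivariant and annihilates $(x^mW_2)N_2(A_2)$, so $N_2(A_2/(x^m))$ is realized as a $W_1$-quotient of $F_{(1,1)}/(x^mW_2)F_{(1,1)}$.

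The second step is to specialize the formula
\[F_{(p,q)}/(x^mW_2)F_{(p,q)}\;\cong\;\bigoplus_{l=0}^{m-2}\bigoplus_{j=q}^p x^l(\ud x)^{p+q-j}F_j\]
from the proof of Theorem~\ref{thm41} to $(p,q)=(1,1)$. Only $j=1$ survives, and one obtains
\[F_{(1,1)}/(x^mW_2)F_{(1,1)}\;\cong\;\bigoplus_{l=0}^{m-2}x^l\,\ud x\cdot F_1,\]
each summand being an irreducible $W_1$-module isomorphic to $F_1$ whose lowest-degree element $x^l\,\ud x\,\ud y$ sits in total degree $l+2$; in the authors' bracket notation this summand is $F_1[l+1]$, so the quotient has Jordan--Hölder series $F_1[1]+F_1[2]+\cdots+F_1[m-1]$.

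The final step is a rank comparison. The putative series $\sum_{l=1}^{m-1}F_1[l]$ has graded rank $\max\bigl(0,\min(d-1,m-1)\bigr)$ at total degree $d$, which coincides with the rank of $N_2(A_2/(x^m))[d]$ predicted by Theorem~\ref{thm37} for $f=x^m$ (so $s=1$). The $W_1$-surjection above is therefore an isomorphism in every graded piece, forcing the Jordan--Hölder series of $N_2(A_2/(x^m))$ to be exactly $F_1[1]+\cdots+F_1[m-1]$. The main bookkeeping obstacle is aligning the grading conventions so that $x^l\,\ud x\cdot F_1$ is indeed identified with $F_1[l+1]$ rather than a neighbouring shift; once that is pinned down, everything else is a routine application of the machinery already built up in Theorems~\ref{thm41} and~\ref{thm37}.
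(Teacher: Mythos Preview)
Your proposal is correct and follows exactly the approach the paper sketches in the remark preceding the proposition: run the $W_1$-quotient argument from the proof of Theorem~\ref{thm41} with $N_2(A_2)\cong F_{(1,1)}$ in place of $B_k$ (this is the relevant instance of Kerchev's result, which you recover directly from the Feigin--Shoikhet isomorphism), and then invoke the rank formula of Theorem~\ref{thm37} with $s=1$ to upgrade the resulting $W_1$-surjection to an isomorphism. Your grading check that $x^l\,\ud x\cdot F_1$ corresponds to $F_1[l+1]$ under the paper's convention (degree in $x$ and $\ud x$) is the only point requiring care, and you have it right.
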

\end{rmk}

\section{Finite-Dimensionality}

Now, as promised, let us make a comparison between the phenomena we observed in previous sections. In both cases, the associated ``commutative'' spaces, $\mathrm{Spec~}\qq[x,y]/(x^m+y^m)$ and $\mathrm{Spec~}\qq[x,y]/(x^m)$, are one-dimensional. However, as we have seen, $B_k(A_2/(x^m+y^m))$ is a finite-dimensional $\qq$-vector space while $B_k(A_2/(x^m))$ is of infinite dimension. This difference is encoded in the information about the locus of non-reduced points in $\textrm{Spec~}A_{\textrm{ab}}$, the spectrum associated with the abelianization of the algebra we began with. The exact statement is the following theorem:

\begin{thm}\label{thm51}
Let $A$ be a finitely generated graded associative algebra over $\qq$ (generated by elements of degree 1) such that $X:=A_{\textrm{ab}}$ is at most of dimension 1 in the Krull sense. Moreover, if we assume that the number of non-reduced points in $\textrm{Spec~}X$ is finite, then $B_k(A)$ and $N_j(A)$ are finite-dimensional $\qq$-vector spaces for $k\geq3$ and $j\geq 2$.
\end{thm}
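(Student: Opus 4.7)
The plan is to equip $N_k(A)$ with the structure of a finitely generated graded $X$-module whose scheme-theoretic support in $\mathrm{Spec}\, X$ is zero-dimensional, from which finite-dimensionality over $\qq$ follows automatically, and then to handle $B_k(A)$ by a parallel argument (or by reduction to $N_k$).

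\textbf{Module structure and finite generation.} For $k\ge 2$ the standard containment $[A,M_k]\subseteq M_{k+1}$ makes $N_k(A)=M_k/M_{k+1}$ into a graded $X$-module via left multiplication. Write $A=A_n(\qq)/I$ and let $\bar I\subseteq\qq[x_1,\dots,x_n]$ be the image of $I$ in the abelianization, so that $X=\qq[x_1,\dots,x_n]/\bar I$. The surjection $N_k(A_n)\twoheadrightarrow N_k(A)$ kills $\bar I\cdot N_k(A_n)$, so $N_k(A)$ is a quotient of the $X$-module $N_k(A_n)/\bar I\cdot N_k(A_n)$. By Proposition~\ref{prop43} together with Kerchev's $N_k$-analogue \cite{kerchev2013}, $N_k(A_n)$ has finite Jordan--H\"older length as a $W_n$-module with subquotients $F_\lambda\cong\qq[x_1,\dots,x_n]\otimes V_\lambda$ that are free of finite rank over $\qq[x_1,\dots,x_n]$. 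Hence $N_k(A_n)$ is finitely generated over $\qq[x_1,\dots,x_n]$, and $N_k(A)$ is a finitely generated graded $X$-module.

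\textbf{Zero-dimensional support (the main obstacle).} Set $\Sigma:=\Sigma_{\mathrm{sing}}(X_{\mathrm{red}})\cup\Sigma_{\mathrm{nonred}}(X)\subseteq\mathrm{Spec}\, X$. Since $\dim X_{\mathrm{red}}\le 1$, the singular locus $\Sigma_{\mathrm{sing}}(X_{\mathrm{red}})$ is a proper closed subset of a reduced curve and hence finite; $\Sigma_{\mathrm{nonred}}(X)$ is finite by hypothesis. The heart of the argument is to prove $\mathrm{Supp}_X(N_k(A))\subseteq\Sigma$. Concretely, at a closed point $p\notin\Sigma$ the completion $\widehat X_p\cong\qq[[t]]$ is formally smooth one-dimensional, and one wants $(N_k(A))_p=0$. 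One natural route is to pass to $\widehat A_p$ and argue via Hochschild cohomology that any associative lift of a smooth one-dimensional commutative ring is commutative up to filtration degree $k$, so $N_k$ vanishes in the completion. A more hands-on alternative is to work piece by piece through the $F_\lambda$'s: describe the image of each $F_\lambda$ in $N_k(A)$ as the cokernel of multiplication by $\bar I$ together with the Jacobian-type operators $[f,-]$ arising from $f\in I$, directly generalizing the Euler/Jacobian computation behind Theorem~\ref{thm37}, and verify that the simultaneous zero locus of these relations equals $\Sigma$. This verification is the main technical obstacle.

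\textbf{Conclusion and $B_k$.} A finitely generated graded module over a Noetherian graded $\qq$-algebra supported on a finite set of closed points is annihilated by some power of the irrelevant ideal after quotienting by its annihilator, and is therefore finite-dimensional over $\qq$; this proves the claim for $N_j(A)$, $j\ge 2$. For $B_k(A)$ with $k\ge 3$, one runs a parallel argument using that $B_k(A)$ is a quotient of the finite-length $W_n$-module $B_k(A_n)$ whose $F_\lambda$ subquotients are again finitely generated over $\qq[x_1,\dots,x_n]$; after imposing the relations from $I$, the resulting support is contained in $\Sigma$ by the same local argument. Alternatively, the map $B_k\to N_k$ induced by $L_k\hookrightarrow M_k$ can be used: in characteristic zero and for $k\ge 3$ one controls its kernel $(L_k\cap M_{k+1})/L_{k+1}$ by the same support machinery, so finite-dimensionality of $N_k$ propagates to $B_k$. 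The restriction $k\ge 3$ is essential, as the discrepancy between $\mathrm{Rank}(B_2)$ and $\mathrm{Rank}(N_2)$ already observed in Section~3 for $\qq\langle x,y\rangle/(x^m+y^m)$ attests.
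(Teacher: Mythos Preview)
Your overall strategy---show $N_k(A)$ is a finitely generated module over a commutative ring built from $A$, prove it has zero-dimensional support via the completion result of \cite{jordan2014}, and then reduce $B_k$ to $N_k$---is exactly the paper's. But two of the steps, as you have written them, do not go through.

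The ``standard containment'' $[A,M_k]\subseteq M_{k+1}$ is false. In $A_4$ with $k=2$, take $a=x_1$ and $m=x_2[x_3,x_4]\in M_2$; then $[a,m]=[x_1,x_2][x_3,x_4]+x_2[x_1,[x_3,x_4]]$, and under the Feigin--Shoikhet isomorphism $A_4/M_3\cong\Omega^{\mathrm{even}}_*$ the first summand maps to $4\,dx_1dx_2dx_3dx_4\neq 0$, so $[a,m]\notin M_3$. Hence $N_k$ is \emph{not} an $X$-module via left multiplication, and your descent ``$N_k(A)$ is a quotient of $N_k(A_n)/\bar I\cdot N_k(A_n)$'' is not well-posed. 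The paper avoids this by working instead over $C:=A/M_3(A)$ with the symmetrized product $a*b=(ab+ba)/2$: the action $a*m=(am+ma)/2$ on $N_j$ is well-defined because $M_jM_3\subset M_{j+2}$ (\cite{bapat2013}), finite generation of $N_j$ over $C$ is quoted from \cite{etingof2009,jordan2014}, and since $M_2/M_3$ is a nilpotent ideal of $C$ (\cite{jennings1947}) the spectra of $C$ and $X$ agree as topological spaces, so your support argument can still be run on $\mathrm{Spec}\,X$.

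Your handling of $B_k$ also fails: $B_k=L_k/L_{k+1}$ carries no $C$- or $X$-module structure (already $A\cdot L_k=M_k\supsetneq L_k$), so neither a ``parallel argument'' nor support machinery applied to the kernel $(L_k\cap M_{k+1})/L_{k+1}$ of $B_k\to N_k$ is available. The paper's reduction is different: Lemma~\ref{lemma52} gives $\sum_i[x_i,M_{2r+1}]\subset L_{2r+2}$, while $L_{2r}\subset\sum_i[x_i,M_{2r-1}]$ holds by definition; a snake-lemma comparison of the bracket maps $V\otimes M_{2r\pm 1}\to\sum_i[x_i,M_{2r\pm 1}]$ then yields, in each graded piece, $\dim B_{2r}+\dim B_{2r+1}\le n(\dim N_{2r-1}+\dim N_{2r})$ and $\dim B_{2r+1}\le n\dim N_{2r}$, which reduces finite-dimensionality of $B_k$ for $k\ge 3$ to that of $N_j$ for $j\ge 2$.
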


\begin{rmk}
Part of this theorem was stated in Jordan and Orem's paper \cite{jordan2014} as Corollary 3.10. We obtained this result independently and generalized it to $B_k$.
\end{rmk}

To prove the theorem, we need to cite the following powerful lemma:

\begin{lemma}(\cite{bapat2013})\label{lemma52}
$[M_j,L_k]\subset L_{k+j}$, whenever $j$ is odd.
\end{lemma}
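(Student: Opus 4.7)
The plan is to prove the $N_j$ case first, using that each $N_j$ is a finitely generated graded $A_{\mathrm{ab}}$-module with $0$-dimensional support, and then reduce the $B_k$ case to the $N_k$ case.

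For the $N_j$ part: an application of the Leibniz rule to $[a, bc]$ with $bc \in A L_j = M_j$, combined with Lemma~\ref{lemma52} to absorb the residual term $L_2 \cdot L_j$ into $M_{j+1}$, gives $[A, M_j] \subset M_{j+1}$; this equips each $N_j(A)$ with a natural $A_{\mathrm{ab}}$-module structure. The module is finitely generated over $A_{\mathrm{ab}}$ because $N_j$ is $\qq$-spanned, modulo $M_{j+1}$, by the $j$-fold iterated brackets of the (finitely many) degree-$1$ generators of $A$. It remains to bound the support. For $j = 2$, the calculation of Section~3 (extended to arbitrary finitely presented $A$) realizes $N_2$ as a direct sum of copies of $A_{\mathrm{ab}}/J$, where $J$ is the Jacobian ideal of the abelianized relations; the vanishing locus $V(J) \subset \mathrm{Spec}\,A_{\mathrm{ab}}$ is the union of the singular locus of $(\mathrm{Spec}\,A_{\mathrm{ab}})_{\mathrm{red}}$ and the non-reduced locus of $\mathrm{Spec}\,A_{\mathrm{ab}}$, which is $0$-dimensional under our hypothesis. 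For $j \geq 3$, an induction on $j$ using the natural multiplication $N_i \otimes_{A_{\mathrm{ab}}} N_{j-i} \to N_j$ on the associated graded of the $M$-filtration shows $\mathrm{Ann}(N_2) \subset \mathrm{Ann}(N_j)$, so the support remains $0$-dimensional. A finitely generated graded module over the Noetherian $\qq$-algebra $A_{\mathrm{ab}}$ with $0$-dimensional support is concentrated in finitely many graded degrees and hence finite-dimensional over $\qq$.

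For $B_k$ with $k \geq 3$: the inclusion $L_k \hookrightarrow M_k$ induces the short exact sequence
\[
0 \to (L_k \cap M_{k+1})/L_{k+1} \to B_k(A) \to (L_k + M_{k+1})/M_{k+1} \to 0,
\]
whose third term embeds into $N_k$ and is therefore finite-dimensional by the preceding argument. For the kernel $(L_k \cap M_{k+1})/L_{k+1}$, any representative $a \in L_k \cap M_{k+1}$ can be written $a = \sum a_i l_i$ with $a_i \in A$ and $l_i \in L_{k+1}$; iterating Lemma~\ref{lemma52} with odd indices and applying Jacobi identities rewrites $a$ modulo $L_{k+1}$ as a $\qq$-combination of classes lying in finite-dimensional pieces already under control (images of $N_k$ and of lower $B_j$'s). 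An induction on $k$ then closes the argument.

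The main obstacle is the support bound for higher $N_j$'s: passing from $N_2$, where the Jacobian description is explicit, to $N_j$ for $j \geq 3$ requires both the multiplicative structure on the associated graded of the $M$-filtration and a careful lifting of annihilators through Lemma~\ref{lemma52}. A secondary technical point is the inclusion $L_2 \cdot L_j \subset M_{j+1}$ used to endow $N_j$ with its $A_{\mathrm{ab}}$-module structure; this needs either a direct consequence of Lemma~\ref{lemma52} or an independent Jacobi-identity computation.
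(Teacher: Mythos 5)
Your proposal does not address the statement you were asked to prove. The statement is Lemma~\ref{lemma52}, the purely algebraic inclusion $[M_j,L_k]\subset L_{k+j}$ for $j$ odd, which holds for an arbitrary associative algebra and is quoted in the paper from \cite{bapat2013} (the paper gives no proof of its own, only the citation). What you have written is instead an outline of Theorem~\ref{thm51}, the finite-dimensionality of $B_k(A)$ and $N_j(A)$ under the hypotheses on $\mathrm{Spec}\,A_{\mathrm{ab}}$. Worse, your argument explicitly \emph{invokes} Lemma~\ref{lemma52} twice (to get $[A,M_j]\subset M_{j+1}$ and again in the $B_k$ reduction), so even reinterpreted charitably it is circular as a proof of the lemma itself.

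A correct proof of Lemma~\ref{lemma52} would have to be an identity-level argument: one works with iterated brackets, the Jacobi identity, and the known inclusions among the $L_i$'s and $M_i$'s (e.g.\ $[L_i,L_j]\subset L_{i+j}$ fails in general, which is exactly why the parity hypothesis on $j$ is needed), as is done in \cite{bapat2013}. None of the machinery you deploy --- module structures over $A_{\mathrm{ab}}$, Jacobian ideals, support arguments, Jordan--H\"older considerations --- is relevant to establishing this inclusion, and no amount of repair along those lines will produce it. You should restart from the combinatorics of commutators in the free algebra.
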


Let us prove Theorem \ref{thm51}.

\begin{proof}
Apply Lemma \ref{lemma52} to the case $k=1$ and $j=2r+1$, then $[L_1,M_{2r+1}]\subset L_{2r+2}$, which implies \begin{equation}\label{eq1}\sum_i[x_i,M_{2r+1}]\subset L_{2r+2},\end{equation} where $x_i$ are degree-one generators of $A$. On the other hand, we always have $L_j\subset\sum_i[x_i,M_{j-1}]$ by definition. In particular, set $j=2r$, we get \begin{equation}\label{eq2}L_{2r}\subset\sum_i[x_i,M_{2r-1}].\end{equation} Combining (\ref{eq1}) and (\ref{eq2}), we get (to be understood in each graded component) \begin{equation}\label{eq3}\dim L_{2r}-\dim L_{2r+2}\leq\dim(\sum_i[x_i,M_{2r-1}])-\dim(\sum_i[x_i,M_{2r+1}]).\end{equation}

Now, let $V$ be the $\qq$-vector space spanned by the $x_i$'s, which we assume to be $n$-dimensional. Let \[\begin{split}\lambda:V\otimes M_{2r-1}&\to\sum_i[x_i,M_{2r-1}]\\ \mu:V\otimes M_{2r+1}&\to\sum_i[x_i,M_{2r+1}]\end{split}\] be the natural maps sending $a\otimes b$ to $[a,b]$. We have the following commutative diagram: \[\xymatrix{0\ar[r] & \ker \mu\ar[r]\ar[d]_\alpha & V\otimes M_{2r+1}\ar[r]^\mu\ar[d]_\beta & \sum_i[x_i,M_{2r+1}]\ar[r]\ar[d]_\gamma & 0\\ 0\ar[r] & \ker \lambda\ar[r] & V\otimes M_{2r-1}\ar[r]^\lambda & \sum_i[x_i,M_{2r-1}]\ar[r] & 0}\] where both rows are exact, and $\beta$ and $\gamma$ are injections. The snake lemma gives a long exact sequence that reduces to a short exact sequence \[0\to\coker\alpha\to\coker\beta\to\coker\gamma\to0,\] hence (again, to be understood in each graded component) \[\dim\coker\gamma=\dim\coker\beta-\dim\coker\alpha\leq\dim\coker\beta.\] That is, \[\begin{split}\dim(\sum_i[x_i,M_{2r-1}])-\dim(\sum_i[x_i,M_{2r+1}])&\leq\dim V\otimes M_{2r-1}-\dim V\otimes M_{2r+1}\\ &=n(\dim M_{2r-1}-\dim M_{2r+1}).\end{split}\] Plugging it in (\ref{eq3}), we conclude \begin{equation}\label{eq4}\dim B_{2r}+\dim B_{2r+1}\leq n(\dim N_{2r-1}+\dim N_{2r}).\end{equation}
A similar argument gives \begin{equation}\label{eq5}\dim B_{2r+1}\leq n\dim N_{2r}.\end{equation}
Therefore we only have to prove the finite-dimensionality of $N_j$ for $j\geq2$.

It is known that $C:=A/M_3(A)$ has a commutative ring structure: the multiplication is defined by $a*b=(ab+ba)/2$. In addition, each $N_j$ has a $C$-module structure as follows: for $a\in C$ and $m\in N_j$, we also use $a\in A$ and $m\in M_j$ to denote the fixed liftings of $a$ and $m$. The multiplication is again defined by $a*m=(am+ma)/2$. Note $m\in M_j$, $(am+ma)/2\in M_j$, we define $a*m\in N_j$ to be the equivalent class of $(am+ma)/2$.

It is straightforward to check that this multiplication is well-defined. We should point out that $M_jM_3\subset M_{j+2}$ (\cite{bapat2013} Corollary 1.4) is needed in this routine verification.

It is also established (see \cite{etingof2009}, \cite{jordan2014}) that $C$ is a finitely generated algebra over $\qq$ and $N_j$ a finitely generated module over $C$. By a standard result from commutative algebra, in order to show that $N_j$ is a finite dimensional $\qq$-vector space, we only have to show that $N_j$ has finite support as a coherent sheaf on $\mathrm{Spec~}C$.

Now let $X:=A_{\textrm{ab}}=A/M_2(A)$ and consider the short exact sequence \[0\to M_2(A)/M_3(A)\to C\to X\to 0.\] Using results from \cite{jennings1947}, we find that $M_2(A)/M_3(A)$ is a nilpotent ideal in $C$. In other words, $\mathrm{Spec~}X$ and $\mathrm{Spec~}C$ share the same reduced scheme (in particular, the same underlying topological space).

The goal is to prove that $N_j$ has finite support, and by our assumption, we need to deal with reduced points only. Further, since $\textrm{Spec~}X$ is assumed to be at most one-dimensional, it suffices to prove that $N_j$ is supported on the set of singular points. Let $x$ be a reduced and smooth point of $\textrm{Spec~}X$. Then, from \cite{jordan2014} Corollary 3.9, we have $N_j(A)_x=N_j(A_x)$, where the subscript $x$ denotes the completion at $x$. Moreover, our assumptions on $X$ guarantee that $A_x$ is commutative (being $\qq[[x]]$ or $\qq$), therefore $N_j(A)_x=0$. As a consequence, $N_j$ has finite support and it is of finite dimension.

\end{proof}

\section{Acknowledgement}

This paper originates from the RSI projects suggested to the authors by Prof.~Pavel Etingof in Summer 2012. The authors are very grateful for the ideas that Pavel kindly shared with us, as well as for the inspiring discussions we enjoyed with him. The email correspondence with Krasilnikov was very helpful. We also would like to express our thanks to Martina Balagovic and David Jordan for their help on Magma codes.

\section{Appendix: Torsion Subgroups in $N_k(A_n(\zz))$}

It was noticed in \cite{bhupatiraju2012} that there are no torsion elements in $N_k(A_n(\zz))$ when $k$ and $n$ are small. A natural guess would be that this holds for every $k$ and $n$. Surprisingly, Krasilnikov found a 3-torsion in $N_3(A_5(\zz))$ and more torsion elements for higher $k$ and $n$ (see \cite{krasilnikov2013}). He predicts that 3-torsion is the only possible one showing up in $N_k(A_n(\zz))$, and they arise somehow ``by coincidence''.

In fact, Krasilnikov and his coauthors obtained many more results (see \cite{deryabina2013}, \cite{dacosta2013}). For instance, in \cite{deryabina2013} they proved that \[\{x_1^{n_1}\cdots x_k^{n_k}[x_{i_1},[x_{i_2},x_{i_3}]]\cdot[x_{i_4},x_{i_5}]\cdots[x_{i_{2l}},x_{i_{2l+1}}]\}_{i_1<i_2<\dots<i_{2l+1}}\] forms a basis of 3-torsion in $N_3(A_n)$. In particular, the number of copies of $\zz/3$ in $N_3(A_n)[1,1,\dots,1]$ is given by the sum of binomial coefficients \[{n\choose5}+{n\choose7}+{n\choose9}+\dots\] Our data obtained by Magma agrees with Krasilnikov's result. Similarly, many results about $N_4(A_n)$ can be deduced from \cite{dacosta2013}, in which the generators of $M_5(A_n)$ were determined explicitly.

This appendix serves as an experimental verification of Krasilnikov's predictions. The computational data obtained by Magma is listed in the next page. For example, if we assign each variable to be of degree 1, then $N_5(A_5(\zz))$ has torsion subgroup $\zz/3$ in degree $(1,1,1,1,3)$ and $(\zz/3)^2$ in degree $(1,1,1,2,3)$. A blank entry indicates that there is no torsion elements. Red question marks ``\red{?}'' represent data currently beyond our computability. A particular interesting phenomenon is that no torsion is found in $N_4$ so far, therefore we do not include the $N_4$ row in the table.

\newpage
\setlength{\textheight}{220mm}

\begin{landscape}
$N_k(A_2)$: No torsion up to total degree 14.\\
~\\

$N_k(A_3)$: No torsion up to total degree 10.\\
~\\

$N_k(A_4)$: No torsion up to total degree 9.\\
~\\

\begin{tabular}{c|cccccccccccc}
$N_k(A_5)$ & \!(1,1,1,1,1)\! & \!(1,1,1,1,2)\! & \!(1,1,1,1,3)\! & \!(1,1,1,2,2)\! & \!(1,1,1,1,4)\! & \!(1,1,1,2,3)\! & \!(1,1,2,2,2)\! & \!(1,1,1,1,5)\! & \!(1,1,1,2,4)\! & \!(1,1,1,3,3)\! & \!(1,1,2,2,3)\! & \!(1,2,2,2,2)\!\\
\hline
$N_3$ & $\zz/3$ & $\zz/3$ & $\zz/3$ & $\zz/3$ & $\zz/3$ & $\zz/3$ & $\zz/3$ & $\zz/3$ & $\zz/3$ & $\zz/3$ & $\zz/3$ & $\zz/3$\\
$N_5$ & & & $\zz/3$ & $\zz/3$ & $\zz/3$ & $(\zz/3)^2$ & $(\zz/3)^3$ & $\zz/3$ & $(\zz/3)^2$ & $(\zz/3)^3$ & $(\zz/3)^4$ & \red{?}\\
$N_6$ & & & & & $\zz/3$ & & & $\zz/3$ & $\zz/3$ & & & \red{?}\\
$N_7$ & & & & & & & & $\zz/3$ & $\zz/3$ & $(\zz/3)^2$ & $(\zz/3)^2$ & \red{?}\\
\end{tabular}
~\\
~\\

\begin{tabular}{c|ccccc}
$N_k(A_6)$ & (1,1,1,1,1,1) & (1,1,1,1,1,2) & (1,1,1,1,1,3) & (1,1,1,1,2,2) & (1,1,1,1,1,4)\\
\hline
$N_3$ & $(\zz/3)^6$ & $(\zz/3)^6$ & $(\zz/3)^6$ & $(\zz/3)^6$ & $(\zz/3)^6$\\
$N_5$ & & $(\zz/3)^6$ & $(\zz/3)^{12}$ & $(\zz/3)^{18}$ & $(\zz/3)^{12}$\\
$N_6$ & & & $\zz/3$ & & $(\zz/3)^6$\\
$N_7$ & & & & & $(\zz/3)^7$\\
\end{tabular}
~\\
~\\
~\\

\begin{tabular}{c|cc}
$N_k(A_7)$ & (1,1,1,1,1,1,1) & (1,1,1,1,1,1,2)\\
\hline
$N_3$ & $(\zz/3)^{22}$ & $(\zz/3)^{22}$\\
$N_5$ & $(\zz/3)^{21}$ & $(\zz/3)^{69}$\\
$N_6$ & & $\zz/3$
\end{tabular}
\end{landscape}
\newpage

\setlength{\textheight}{210mm}

\bibliographystyle{alpha}

\bibliography{C:/Users/Piojo/Dropbox/Documents/Source}

\end{document}